\documentclass[draft, 12pt, reqno]{amsart} 
\usepackage{amssymb,latexsym,amsfonts,verbatim,amscd,ifthen} 
\usepackage{color}
\usepackage{relsize}
\usepackage[utf8]{inputenc}
\usepackage{mathrsfs}
\usepackage{mathbbol}

\newcommand{\C}{{\mathbb C}}
\newcommand{\R}{{\mathbb R}}

\newcommand{\Po}{{\mathcal P}}

\newcommand{\Pp}{{\mathbb P}}

\numberwithin{equation}{section}
\newcommand{\pr}{{\mathcal P}}
\theoremstyle{plain}

\newtheorem{theorem}{Theorem}[section]
\newtheorem{lemma}[theorem]{Lemma}
\newtheorem{proposition}[theorem]{Proposition}
\newtheorem{corollary}[theorem]{Corollary}

\newtheorem*{definition*}{Definition}

\newtheorem{definition}[equation]{Definition}

\begin{document}
\title{ Kippenhahn's conjecture revisited}
\author{Michael  Stessin}
\address{Department of Mathematics and Statistics \\
University at Albany, SUNY \\
Albany, NY 12222}
\email{mstessin@albany.edu}
\begin{abstract}
 In 1951 paper \cite{Ki} Kippenhahn  conjectured that if the characteristic polynomial \  $P_A(x_1,x_2,x_3)=\mbox{det}(x_1A_1+x_2A_2-x_3I)$, \ where $A_1$ and $A_2$ are $n\times n$ Hermitian matrices, has a repeated
factor in the polynomial ring $\C[x_1,x_2,x_3]$, then the pair $(A_1,A_2)$ is unitary equivalent to a direct sum $(C_1\oplus C_2, \ D_1\oplus D_2)$ where $C_i, D_i\in M_{n_i}(\C)  $ for some $1\leq n_i<n, \ n_1+n_2=n,  i=1,2$. Kippenhahn verified the conjecture whenever
the degree of the minimal polynomial of $x_1A_1 + x_2A_2$ is 1 or 2. In subsequent
works \cite{Sh1,Sh2} Shapiro obtained a number of results which supported the
conjecture. In particular, she showed that it held if $n \leq 5$. In 1983 Laffey \cite{La} showed that, in general, Kippenhahn's conjecture was not true by constructing a counterexample for $n=8$.
Since then additional counterexamples were worked out (see \cite{Wa} for example). Some positive results in this direction including the quantum version of the conjecture can be found in \cite{F1, F2, KVo1, Law}.

In this paper we use methods of recently developed local spectral analysis to give some necessary and sufficient conditions for the affirmative answer to Kippenhahn's conjecture in terms of the characteristic polynomials of certain elements of the algebra generated by the matrices in the tuple.

\end{abstract}

\keywords{projective joint spectrum, algebraic manifold, reducing subspace}
\subjclass[2010]
{Primary:  47A25, 47A13, 47A75, 47A15, 14J70.	Secondary: 47A56, 47A67}
\maketitle

\section{Introduction}

Given a set $A_1,...,A_m$ of square $N\times N$ matrices, the characteristic polynomial of the linear combination, $det(x_1A_1+\cdots +x_mA_m-x_{m+1}I)$, (such linear combination is called \textit{a pencil}) is a homogeneous polynomial of degree $N$ in variables $x_1,...,x_{m+1}$.  The zero set  of this polynomial  determines an algebraic variety in the projective space $\C\Pp^{m}$, which is called \textit{the determinantal variety of the pencil}. 

In this paper we  denote the characteristic polynomial and determinantal variety of a pencil determined by $A=(A_1,...,A_m)$ by
\begin{eqnarray*}
& P_A(x_1,...,x_{m+1})=det(x_1A_1+\cdots +x_mA_m-x_{m+1}I), \\
&\sigma(A)=\sigma(A_1,...,A_m) \\
&=\big\{ [x_1:\cdots :x_{m+1}]\in \C\Pp^{m}: \ 	P_A(x_1,...,x_{m+1})=0\big\}
\end{eqnarray*}
respectively. 

%To avoid trivial redundancies it is frequently assumed that at least one of the matrices $A_1,...,A_m$ is invertible, and, thus, can be assumed to be the identity (or rather $-I$). In what follows we always assume that $A_{m+1}=-I$ and write $P_A(x_1,...,x_{m+1})=det(x_1A_1+\cdots +x_mA_m-x_{m+1}I)$ and $\sigma(A)=\big\{ [x_1:\cdots :x_m:x_{m+1}]\in \C\Pp^{m}: \ det(x_1A_1+\cdots +x_mA_m-x_{m+1}I)=0\big\}$.

Linear pencils of matrix tuples and their determinantal varieties  have been under scrutiny for a long time. They appear in various areas of mathematics and physics. Here are just a few of such areas.
\begin{itemize}
\item Notably, the study of group determinants led Frobenius to laying out the foundation of representation theory. 
\item For matrix pairs Kronecker \cite{Kr} established a special form that is called Kronecker Canonical Form. This form is an important tool in  numerical linear algebra, differential-algebraic equations, and control theory.
\item The investigation of the question when an algebraic variety of codimension 1 in a projective space admits a determinantal representation 
%that is there is a pencil which has this variety as its determinantal variety 
was initiated by Dickson \cite{D1}-\cite{D5}, and has been  continuing in the framework of algebraic geometry since then. Without trying to give an exhausting account of the references on this topic, we just mention  \cite{Ca, HV, KV, V}, as well as the monograph \cite{D} and references therein.
\item Joint numerical ranges of Hermitian pencils appear  in problems of
experimental and theoretical physics as they represent quantum states.(see \cite{AS, BZ}). Recent paper \cite{PSW} contains late results and numerous references on the topic.

\item In the last 15 years connections between linear representations of groups and algebras and determinantal varieties of their generators have been studied in the framework of operator theory (see \cite{AY,CCD, CST, GY, GLX, HY, HZ, JL, KVo2, PS1, PS2, S1, S2, Y1}).  In particular, \cite{GY}, \cite{CST}, \cite{PS1} established spectral characterization of representations of infinite dihedral group, of non-special finite Coxeter groups, and of some subgroups of the permutation group related to the Hadamard matrices of Fourier type. In \cite{GLX, JL} a spectral aspect of representations of simple Lie algebras and, in particular,  certain representations of ${\mathfrak s}{\mathfrak l}(2)$ were investigated. 

\end{itemize}

\vspace{.2cm}

An important question appearing in representation theory is when a representation is reducible. For  a finitely generated group $G$ or algebra ${\mathcal A}$ a finite dimensional linear representation $\rho$ is reducible when the matrix tuple of the images of generators under $\rho$ is reducible (decomposable for unitary representations). A investigation of decomposable tuples via the geometry of determinantal varieties of the corresponding pencils was originated in early 1950-s and is still continuing.  (see \cite{Ki, F1, F2, MT1, MT2, KVo1, KVo2, La, Law, Sh1, Sh2, CSZ, MQW, S, SY}). 

If a tuple $A_1,...A_m$ has a common invariant subspace, the characteristic polynomial of the corresponding pencil has a nontrivial factor, and, respectively, the determinantal variety has a proper component. The converse is not true: there are simple examples of matrix tuples without common invariant subspaces whose determinantal varieties have proper algebraic components.  (see \cite{KVo2, ST} for examples). The problem of finding necessary and sufficient conditions for the existence of common reducing subspaces  was considered in \cite{KV,S}. In recent papers \cite{S,SY} some necessary and sufficient conditions for a non-trivial proper component of the determinantal variety of a matrix pencil to correspond to a common invariant subspace were expressed in terms of determinantal varieties of certain elements of the $C^*$-algebra (just the algebra for Hermitian tuples) generated by the underlying tuple. Under these conditions, if a determinantal variety has a non-trivial component of degree $n$ which has multiplicity $k$, then the underlying tuple has a common invariant subspace of dimension $nk$. 

Now, it is natural to ask whether a further reduction is possible. This question for matrix pairs was originally considered by Kippenhahn in his 1951 paper \cite{Ki}. There Kippenhahn conjectured that if the characteristic polynomial   $P_A(x_1,x_2,x_2)=det(x_1A_1+x_2A_2-x_3I)$, where $A_1$ and $A_2$ are $n\times n$ Hermitian matrices, has a repeated
factor in the polynomial ring $\C[x_1,x_2,x_3]$, then the pair $(A_1,A_2)$ is unitary equivalent to a direct sum $(C_1\oplus C_2, \ D_1\oplus D_2)$ where $C_i, D_i\in M_{n_i}(\C)  $ for some $1\leq n_i<n, \ n_1+n_2=n,  i=1,2$. Kippenhahn verified the conjecture whenever the minimal polynomial of $x_1A_1 + x_2A_2$ has degree 1 or 2. In subsequent
works \cite{Sh1,Sh2}, Shapiro obtained a number of results which supported the
conjecture. In particular, she showed that it held, if $n \leq 5$. In 1983 Laffey \cite{La} showed that, in general, Kippenhahn's conjecture was not true by constructing a counterexample for $n=8$.
Since then additional counterexamples were worked out (see \cite{Wa} for example). In 2017 Klep and Vol\u{c}i\u{c} \cite{KVo1} proved the validity of a quantum version of Kippenhahn conjecture. Additional examples can be found in \cite{Law}.

In this paper we return to the original Kippenhahn's conjecture for Hermitian matrix tuples of arbitrary length. Let $A=(A_1,...,A_m)$ be a tuple of Hermitian matrices with $P_A(x_1,...,x_{m+1})=(R(x_1,...,x_{m+1}))^k$ where $R$ is a homogeneous polynomial of degree $n$. We use local spectral analysis to find necessary and sufficient conditions for the tuple $A$ to be unitary equivalent to a tuple that is a direct sum of $k$ copies of a tuple of $n$-dimensional matrices.

The structure of this paper is as follows. In section \ref{background} we assembled necessary background results that are used in the proof of the main theorem. Section \ref{main} is devoted to the proof of this theorem, which is done in several steps. First we do it for $m=2$ under some regularity conditions. We then extend it to the case of an arbitrary $m$ under these conditions. Finally, we establish the result for general tuples as a corollary to the one for regular tuples.

\vspace{.3cm}

\vspace{.3cm}

\section{\textbf{Background results}}\label{background}

\subsection{Local spectral analysis}\label{spec}

Our main tool for proving results of this paper is the local spectral analysis that was developed for various settings in \cite{PS1}, \cite{S1}, \cite{S3}, \cite{ST} in both finite- and infinite-dimensional cases. Strictly speaking the version we need here, namely, the case when one of the operators is self-adjoint, while the others may not be, did not appear in either of these works. The argument in this case follows more or less the same lines. To make our presentation self-contained we present it here. In this paper we are dealing with matrix tuples, so to simplify our presentation we will concentrate only on this setting, even though a similar result  is valid in infinite dimensions as well.

%\vspace{.3cm}

\vspace{.3cm}

Let $A_1,...,A_m$ be $N\times  N$ complex matrices, and assume that $A_1$ is self-adjoint with respect to the standard Hermitian metric on $\C^N$. Remark that adding to a matrix a scalar multiple of the identity does not change the lattice of its invariant subspaces. For this reason without loss of generality we may assume that $A_1$ is invertible, and, hence, the intersection of $\sigma(A_1,...A_m)$ with the projective line $\{[x_1:0:0:\cdots :0:x_{m+1}]\}$ lies in the chart $U_{m+1}=\{ x_{m+1}\neq 0\}$. Following \cite{ST} we call the part of $\sigma(A_1,...,A_m)$ that belongs to this chart \textit{the proper projective joint spectrum} and denote it by $\sigma_p(A_1,...,.A_m)$. Since $U_{m+1}$ is naturally identified with $\C^m$, we define $\sigma_p(A_1,...,A_m)$ to be a subset in $\C^m$:
\begin{eqnarray}
&\sigma_p(A_1,...,A_m) \nonumber \\
&=\{(x_1,...,x_m)\in \C^m: \ \mbox{det}(	x_1A_1+\cdots +x_mA_m- I)=0\} \label{proper spectrum}
\end{eqnarray}

Let $\lambda_1,...,\lambda_n\in \R$ be the set of eigenvalues of $A_1$ and let their multiplicities be $l_1,...,l_n$ respectively. Since $A_1$ is invertible, non of $\lambda_j$ vanishes. Suppose that $\gamma_j=\{w\in \C: \ |w-\lambda_j|=\epsilon_j\}, \ j=1,...,n$ are contours  that separate $\lambda_1,...,\lambda_n$. Then the orthogonal projection ${\mathcal P}_j, \ j=1,...,n$ on the $\lambda_j$-eigenspace of $A_1$ is given by:
\begin{equation}\label{projection for A 1}
{\mathcal P}_j=\frac{1}{2\pi i}\int_{\gamma_j} (w-A_1)^{-1}dw=\frac{1}{2\pi i} \int_{\gamma_j^\prime}(w-\frac{1}{\lambda_j}A_1)^{-1}dw,	
\end{equation}
where $\gamma_j^\prime=\frac{1}{\lambda_j} \gamma_j$. The rank of ${\mathcal P}_j$ is equal to $l_j$, and the spectral resolution of $A_1$ is given by:
\begin{equation}\label{spectral resolution A 1}
	A_1=\displaystyle \sum_{j=1}^n \lambda_j{\mathcal P}_j. 
\end{equation}
The following direct computation appeared in \cite{ST}.

Let $w\in \gamma_j^\prime$.  Then
\begin{eqnarray*}
&(w-\frac{1}{\lambda_j}A_1)^{-1}=	\frac{1}{w-1}{\mathcal P}_j+\displaystyle \sum_{i\neq j} \frac{1}{w-\frac{\lambda_i}{\lambda_j}}{\mathcal P}_i \\
&=	\frac{1}{w-1}{\mathcal P}_j+\displaystyle \sum_{i\neq j} \frac{1}{(w-1)-(\frac{\lambda_i}{\lambda_j}-1)}{\mathcal P}_i \\
&=\frac{1}{w-1}{\mathcal P}_j-\displaystyle \sum_{i\neq j}\Big[ \frac{\lambda_j}{\lambda_i-\lambda_j} \ \frac{1}{1-\frac{\lambda_j(w-1)}{\lambda_i-\lambda_j}}\Big] {\mathcal P}_i\\
&=\frac{1}{w-1}{\mathcal P}_j-\displaystyle \sum_{i\neq j}\displaystyle \sum_{l=0}^\infty \Big(\frac{\lambda_j}{\lambda_i-\lambda_j}\Big)^{l+1}(w-1)^l{\mathcal P}_j\\
&=\frac{1}{w-1}{\mathcal P}_j-\displaystyle \sum_{i\neq j}\displaystyle \sum_{l=0}^\infty \Big(\frac{\lambda_j}{\lambda_i-\lambda_j}\Big)^{l+1}(w-1)^l{\mathcal P}_j^{l+1}\\
&=\frac{1}{w-1}{\mathcal P}_j-\displaystyle \sum_{l=0}^\infty \bigg(\displaystyle \sum_{i\neq j}\frac{\lambda_j}{\lambda_i-\lambda_j}{\mathcal P}_i\bigg)^{l+1}(w-1)^l.
\end{eqnarray*}
Following \cite{ST} we introduce the operator
\begin{equation}\label{T}
{\mathcal T}_j=	\displaystyle \sum_{i\neq j}\frac{\lambda_j}{\lambda_i-\lambda_j}{\mathcal P}_i.
\end{equation}
Then we have
\begin{equation}\label{w-A 1 inverse}
\bigg(w-\frac{1}{\lambda_j}A_1\bigg)^{-1}=\frac{1}{w-1}{\mathcal P}_j-\displaystyle \sum_{l=0}^\infty (w-1)^l{\mathcal T}_j^{l+1}.
\end{equation}

Further, write down 
\begin{eqnarray}
&\sigma_p(A_1,...,A_m) \nonumber \\
&=\big\{x= (x_1:\cdots :x_{m})\in \C^m: \ R_1(x)^{s_1}\cdots R_k(x)^{s_k}=0\Big\}, \label{spectrum}
\end{eqnarray}
where $R_1,...,R_k$ are irreducible  polynomials in $x_1,...,x_{m}$ of degrees $r_1,...,r_k$ respectively. Of course, $l_1+\cdots +l_n=r_1s_1+\cdots +r_ks_k=N$.

We denote by 
$$\Gamma_j=\big\{R_j=0\big\}, \ j=1,...,k$$ 
the zero-set of $R_j$. 
Then $\Gamma_j$ is included in $\sigma_p(A_1,...,A_m)$ as a component of multiplicity $s_j$. 

For each $1\leq j\leq n$ we have $\tau_j=(1/\lambda_j,0,\dots ,0)\in \sigma_p(A_1,...,A_m)$. Suppose that each $\tau_j$ belongs to a single component $\Gamma_i$ and is a regular point of this component (not counting the multiplicity), with  $\frac{\partial R_i}{\partial x_1}\Big|_{\tau_j} \neq 0$. This implies that in a small neighborhood ${\mathcal O}_\epsilon (\tau_j)$ we have $\sigma_p(A_1,...,A_m)\cap {\mathcal O}_\epsilon(\tau_j)=\Gamma_i\cap {\mathcal O}_\epsilon(\tau_j)$.
By the implicit function theorem $\Gamma_i$ is represented in a, perhaps, smaller neighborhood of $\tau_j$ as: 
\begin{equation}\label{implicit}
x_1=x_{1,j}(x_2,...,x_{m}), \ x_{1,j}(0)=1/\lambda_j. 
\end{equation}
If $\hat{x}=(x_2,...,x_m)$ is close to 0, then 
$$(x_{1,j}(\hat{x}),x_2,...,x_m)\in \sigma_p(A_1,...,A_m),$$ 
so that 1 is an eigenvalue of the pencil $A(\hat{x})=x_{1,j}(\hat{x})A_1+x_2A_2+\cdots+x_mA_m$. Similar to \eqref{projection for A 1} 
\begin{equation}\label{projection for pencil}
{\mathcal P}_j(\hat{x})=\frac{1}{2\pi i} \int_{\gamma^\prime}\big(w-A(\hat{x})\big)^{-1}dw	
\end{equation}
is the projection on the subspace spanned by all Jordan cells of $A(\hat{x})$ corresponding to the eigenvalue 1. If $\hat{x}$ is close to 0, the rank of this projection is equal to $l_j$.

At this point we distinguish between the following  cases: 

%\vspace{.2cm}

\textbf{1. $A_2,...,A_m$ are self-adjoint, so that $(A_1,...,A_m)$ is a self-adjoint tuple.}

%\vspace{.2cm}

It follows from Lemma 4.1 in  \cite{S} that in this caee the function $x_{1j}$ given by \eqref{implicit} has real Taylor coefficients. Therefore, if $\hat{x}\in \R^{m-1}$, then $A(\hat{x})=x_{1,j}(\hat{x})A_1+\cdots +x_mA_m$ is self-adjoint, and ${\mathcal P}_j(\hat{x})$ is an orthogonal projection onto $l_j$-dimensional 1-eigenspace of $A(\hat{x})$. We use \eqref{w-A 1 inverse} to obtain
\begin{eqnarray*}
&0=\big(A(\hat{x})-I)\big){\mathcal P}_j(\hat{x})= \frac{1}{2\pi i} \int_{\gamma^\prime} (w-1)\big(w-A(\hat{x})\big)^{-1}dw \\
&=\frac{1}{2\pi i} \int_{\gamma^\prime}(w-1)\Big[ \big(w-\frac{1}{\lambda_j}A_1\big) \\
&-\Big(\displaystyle \sum_{l=2}^m \big(A_l+\frac{\partial x_{1j}}{\partial x_l}(0)A_1\big)x_l+\sum_{s_2,...,s_m}\frac{\partial ^{|s|} x_{1,j}}{\partial x_2^{s_2}\cdots x_m^{s_m} }(0)x_2^{s_2}\cdots x_m^{s_m}A_1\Big]^{-1}dw \\
&=\frac{1}{2\pi i} \int_{\gamma^\prime}(w-1) \big(w-\frac{1}{\lambda_j}A_1\big)^{-1} 
 \Big[I- \Big(\displaystyle \sum_{l=2}^m \big(A_l+\frac{\partial x_{1j}}{\partial x_l}(0)A_1\big)x_l \\
&+\sum_{s_2,...,s_m}\frac{\partial ^{|s|} x_{1,j}}{\partial x_2^{s_2}\cdots x_m^{s_m} }(0)x_2^{s_2}\cdots x_m^{s_m}A_1\Big)\big(w-\frac{1}{\lambda_j}A_1\big)^{-1}\Big]^{-1}dw 
\end{eqnarray*}
\begin{eqnarray*}
&=\frac{1}{2\pi i} \int_{\gamma^\prime}(w-1) \Big( \frac{1}{w-1}{\mathcal P}_j-\displaystyle \sum_{l=0}^\infty (w-1)^l{\mathcal T}_j^{l+1} \Big) \\
&\times  \displaystyle \sum_{r=0}^\infty \bigg(\displaystyle \sum_{l=2}^m \big(A_l+\frac{\partial x_{1j}}{\partial x_l}(0)A_1\big)x_l 
+\sum_{s_2,...,s_m}\frac{\partial ^{|s|} x_{1,j}}{\partial x_2^{s_2}\cdots x_m^{s_m} }(0)x_2^{s_2}\cdots x_m^{s_m}A_1 \Big) \\
& \times \Big( \frac{1}{w-1}{\mathcal P}_j-\displaystyle \sum_{l=0}^\infty (w-1)^l{\mathcal T}_j^{l+1} \Big)\bigg)^r dw.
\end{eqnarray*} 

The last integral is an analytic matrix-valued function in variables $(x_2,...,x_m)\in \C^{m-1}$. Since it vanishes for all $(x_2,...,x_m)\in \R^{m-1}$, it vanishes for all $\hat{x}\in \C^{m-1}$. Now we will rearrange the terms in the integrand to write down the Taylor expansion of this function. To shorten our notation let us write: \\
\begin{eqnarray}
&\alpha_i^j=	A_i-\frac{\partial x_{1j}}{\partial x_l}(0)A_1,   \label{alpha}\\
&\beta_{s_2,...,s_m}^j= \frac{\partial ^{|s|} x_{1,j}}{\partial x_2^{s_2}\cdots x_m^{s_m} }(0)A_1, \ |s|=s_2+\cdots +s_m\geq 2. \label{beta}
\end{eqnarray}
Then we have:
\begin{equation}\label{int}
\frac{1}{2\pi i} \int_{\gamma^\prime}(w-1) \Big [ \frac{1}{(w-1)^2}{\mathcal P}_j+\displaystyle \sum_{(s_2,...,s_m), \ |s|\geq 1}^\infty {\mathcal A}_{s_2,...,s_m}^jx_2^{s_2}\cdots x_m^{s_m}\Big]dw=0,	
\end{equation}
where ${\mathcal A}_{s_2,...,s_m}^j$ is a matrix-valued function of $w$ comprised the following way. We consider all the words of the type
\begin{equation}\label{word}
W= Q_1C_1Q_2C_2\cdots Q_tC_tQ_{t+1}
\end{equation}
with 
\begin{eqnarray*}
&Q_p= \left\{ \begin{array}{cc} {\mathcal P}_j &  \\ & \\ {\mathcal T}_j^u & \mbox{ for some $u\geq  1$}   \
 \end{array}\right.,  \ p=1,...,t+1, \\
 &C_p=\left\{ \begin{array}{cc} \alpha_i^j & \\ & \\ \beta_{q_2,...,q_m}^j & \mbox{for some $(q_2,...,q_m)$}\end{array} \right ., \ p=1,...,t.
 	\end{eqnarray*}
We say that the power of $C_p$ is equal to 1, if $C_p=\alpha_j^j$. Otherwise, the power of $C_p$ is equal to $|q|$.

For each such word $W$ we write
\begin{eqnarray*}	
%&|W|=t \\
&l(W)=\mbox{ the number of ${\mathcal P}_j$ among $Q_1,...,Q_r$} \\
&n(W)=\mbox{the sum of $(u-1)$ for all ${\mathcal T}_j$ terms among $Q_1,...,Q_r$ }\\
&k_i^j(W)=\mbox{the sum of all $\alpha_i^j$ plus the sum of all $q_i$ in $\beta$ terms among $C_p$}, \\
& i=2,...,m.
\end{eqnarray*}
We call the vector $(k_2^j(W),...,k_m^j(W))$ \textit{the signature of $W$} and denote it by $sign(W)$. Obviously, $k_2^j(W)+\cdots +k_m^j(W)$ is equal to the sum of powers of $C_p$.

Given a signature $(a_2,...,a_m)$, the term ${\mathcal A}_{a_2,...,a_m}^j$ is given by
\begin{equation}\label{signature words}
{\mathcal A}_{a_2,...,a_m}^j= \displaystyle \sum_{sign(W)=(a_2,...,a_m),  }\frac{1}{(w-1)^{l(W)-n(W)}}	W
\end{equation}
Let us denote by $\Omega^j(a_2,...,a_m)$ the following collection of words: 
\begin{eqnarray}
&\Omega^j(a_2,...,a_m) \nonumber \\
&=\big\{ W : \ sign(W)=(a_2,...,a_m), \ l(W)-n(W)=1\big\} \label{Omega self}	
\end{eqnarray}

Now, \eqref{int} and the residue theorem imply
$$
\displaystyle \sum_{\Omega(a_2,...,a_m)^j} W=0.	
$$
Write
\begin{eqnarray*}
&\Omega_{{\mathcal P},{\mathcal P}}^j(a_2,...,a_m) = \big \{ W\in \Omega^j(a_2,...,a_m): \ Q_1=Q_{t+1}={\mathcal	 P}_j \big\},\\
&\Omega_{{\mathcal P},{\mathcal T}}^j(a_2,...,a_m)= \big \{ W\in \Omega:^j(a_2,...,a_m): \ Q_1= \ {\mathcal	 P}_j, Q_{t+1}={\mathcal T}_j^u \big\},\\
&\Omega_{{\mathcal T},{\mathcal P}}^j(a_2,...,a_m)= \big \{ W\in \Omega^j(a_2,...,a_m): \ Q_1= \ {\mathcal	 T}_j^u, Q_{t+1}={\mathcal P}_j \big\},\\
&\Omega_{{\mathcal T},{\mathcal T}}^j(a_2,...,a_m)= \big \{ W\in \Omega^j(a_2,...,a_m): \  Q_{1}={\mathcal T}_j^{u_1}, \ Q_{t+1}=\mathcal{T}_j^{u_{t+1}} \big\}
\end{eqnarray*}

Since the range of ${\mathcal P}_j$ is orthogonal to the range of ${\mathcal T}_j$, we have 
%\begin{equation}\label{mpoments1}
\begin{eqnarray*}
&\displaystyle \sum_{\Omega_{{\mathcal P},{\mathcal P}}^j(a_2,...,a_m)} W =\sum_{\Omega_{{\mathcal P},{\mathcal T}}^j(a_2,...,a_m)} W \\
&=\displaystyle \sum_{\Omega_{{\mathcal T},{\mathcal P}}^j(a_2,...,a_m)} W=\sum_{\Omega_{{\mathcal T},{\mathcal T}}^j(a_2,...,a_m)} W =0.	\end{eqnarray*}
%\end{equation}
\vspace{.3cm}

\noindent \textbf{2. At least one of $A_2,...,A_m$ is not self adjoint}.

\vspace{.2cm}

This case is quite similar to the self-adjoint one. The only difference is that, since $A(\hat{x})$ is no longer self-adjoint,  $(A(\hat{x})-I){\mathcal P}_j=0$ may not hold, but rather
$$(A(\hat{x})-I)^{l_j}{\mathcal P}_j=0. $$
Thus,
\begin{eqnarray*}
&\frac{1}{2\pi i} \int_{\gamma^\prime}(w-1)^{l_j} \Big( \frac{1}{w-1}{\mathcal P}_j-\displaystyle \sum_{l=0}^\infty (w-1)^l{\mathcal T}_j^{l+1} \Big) \\
&\times  \displaystyle \sum_{r=0}^\infty \bigg(\displaystyle \sum_{l=2}^m \big(A_l+\frac{\partial x_{1j}}{\partial x_l}(0)A_1\big)x_l 
+\sum_{s_2,...,s_m}\frac{\partial ^{|s|} x_{1,j}}{\partial x_2^{s_2}\cdots x_m^{s_m} }(0)x_2^{s_2}\cdots x_m^{s_m}A_1 \Big) \\
& \times \Big( \frac{1}{w-1}{\mathcal P}_j-\displaystyle \sum_{l=0}^\infty (w-1)^l{\mathcal T}_j^{l+1} \Big)\bigg)^r dw=0.
\end{eqnarray*} 
In this case we replace $\Omega^j(a_2,...,a_m)$ given by \eqref{Omega self}. with 
\begin{equation}\label{Omega not self}
\widehat{\Omega}^j(a_2,...,a_m)=\big\{ W : \ sign(W)=(a_2,...,a_m), \ l(W)-n(W)=l_j\big\},	
\end{equation}
and, respectively define 
$$\widehat{\Omega}_{{\mathcal P},{\mathcal P}}^j(a_2,...,a_m),  \ \widehat{\Omega}_{{\mathcal P},{\mathcal T}}^j(a_2,...,a_m),  \ \widehat{\Omega}_{{\mathcal T},{\mathcal P}}^j(a_2,...,a_m), \ 
\widehat{\Omega}_{{\mathcal T},{\mathcal T}}^j(a_2,...,a_m)$$
the same way as above with the only difference that in all definitions  $W\in \widehat{\Omega}^j(a_2,...,a_m)$.

\vspace{.2cm}

We summarize the above results in the following Theorem which is similar to the ones proved in \cite{PS1,S3,ST}.

\begin{theorem}\label{local spectral}
Let $(A_1,...,A_m)$ be a tuple of complex $N\times N$ matrices, and $A_1$ be self-adjoint and invertible. Suppose that the proper projective joint spectrum of $(A_1,...,A_m)$ is given by \eqref{spectrum}, and that the reciprocal of each eigenvalue of $A_1$  belongs to a single component $\Gamma_i$ and is a regular point of this component (not counting the multiplicity), with  $\frac{\partial R_i}{\partial x_1}\Big|_{\tau_j} \neq 0$.
Then
 \begin{itemize} 
\item[(a)] If $A_2,...,A_m$ are self-adjoint, we have
\begin{eqnarray}
&\displaystyle \sum_{\Omega_{{\mathcal P},{\mathcal P}}^j(a_2,...,a_m)} W =\sum_{\Omega_{{\mathcal P},{\mathcal T}}^j(a_2,...,a_m)} W \nonumber \\
&=\displaystyle \sum_{\Omega_{{\mathcal T},{\mathcal P}}^j(a_2,...,a_m)} W=\sum_{\Omega_{{\mathcal T},{\mathcal T}}^j(a_2,...,a_m)} W =0.	\label{local self}
\end{eqnarray}
\item[(b)]. If at least one of $A_2,...,A_m$ is not self-adjoint
\begin{eqnarray}
&\displaystyle \sum_{\widehat{\Omega}_{{\mathcal P},{\mathcal P}}^j(a_2,...,a_m)} W =\sum_{\widehat{\Omega}_{{\mathcal P},{\mathcal T}}^j(a_2,...,a_m)} W \nonumber \\
&=\displaystyle \sum_{\widehat{\Omega}_{{\mathcal T},{\mathcal P}}^j(a_2,...,a_m)} W=\sum_{\widehat{\Omega}_{{\mathcal T},{\mathcal T}}^j(a_2,...,a_m)} W =0.	\label{local not self}
\end{eqnarray}
\end{itemize}	

\end{theorem}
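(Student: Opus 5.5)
The plan is to formalize the computation preceding the statement. Fix $j$, use the standing hypotheses to parametrize $\Gamma_i$ near $\tau_j$ by \eqref{implicit}, and consider the Riesz projection ${\mathcal P}_j(\hat{x})$ from \eqref{projection for pencil}. For $\hat{x}$ near $0$ its rank is $l_j$: by continuity of the Riesz projection it is close to ${\mathcal P}_j$, and no other eigenvalue of $A(\hat{x})$ enters the disk bounded by $\gamma'$. First I establish the identity $(A(\hat{x})-I)^{p}{\mathcal P}_j(\hat{x})=0$. In case (a) take $p=1$; this is legitimate because Lemma 4.1 of \cite{S} makes $x_{1,j}$ real on real $\hat{x}$, so $A(\hat{x})$ is self-adjoint and ${\mathcal P}_j(\hat{x})$ is the orthogonal projection onto an eigenspace. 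In case (b) take $p=l_j$, since the generalized eigenspace has dimension $l_j$. One point needs care in case (b): all eigenvalues inside $\gamma'$ must equal exactly $1$. This holds because, near $\tau_j$, the spectrum coincides with the single smooth sheet $\Gamma_i$ given by the graph $x_1=x_{1,j}(\hat{x})$. Fixing $\hat{x}$, the eigenvalues $\mu$ near $1$ of $A(\hat{x})$ correspond, after rescaling, to points of $\sigma_p$ on this sheet, and these all give $\mu=1$. Here I use that $\partial R_i/\partial x_1\neq 0$ at $\tau_j$.

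Next I write the identity as a contour integral of $(w-1)^p(w-A(\hat{x}))^{-1}$. Set $A(\hat{x})=\frac{1}{\lambda_j}A_1+B(\hat{x})$, where $B$ collects the linear terms (the $\alpha$'s) and the higher Taylor terms (the $\beta$'s). I expand
\[
(w-A(\hat{x}))^{-1}=\sum_r (w-\tfrac{1}{\lambda_j}A_1)^{-1}\big[B(w-\tfrac{1}{\lambda_j}A_1)^{-1}\big]^r
\]
as a Neumann series and substitute \eqref{w-A 1 inverse}. For $\hat{x}$ small and $w\in\gamma'$ everything converges absolutely and uniformly, so I may exchange the integral and the sums. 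The result is a convergent power series in $\hat{x}$. In case (a) it vanishes on a real neighborhood of $0$, so by analyticity it vanishes identically. In case (b) it vanishes directly on a complex neighborhood.

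Then I group the terms by monomial $x_2^{a_2}\cdots x_m^{a_m}$; this is exactly the grouping by signature. Each word $W$ carries the scalar factor $(w-1)^{p-l(W)+n(W)}$, up to the index bookkeeping coming from the $(w-1)^l{\mathcal T}^{l+1}$ terms. By the residue theorem, only the words with exponent $-1$ survive, that is, those with $l(W)-n(W)=p$. This gives $\sum_{\Omega^j}W=0$, respectively $\sum_{\widehat\Omega^j}W=0$, for each signature.

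The last step splits this sum into four pieces. I multiply on the left by ${\mathcal P}_j$ or by $I-{\mathcal P}_j$, and on the right likewise. Since ${\mathcal P}_j{\mathcal T}_j={\mathcal T}_j{\mathcal P}_j=0$ and ${\mathcal T}_j$ has range in $\operatorname{ran}(I-{\mathcal P}_j)$, each compression retains exactly one class $\Omega_{\ast,\ast}$. Hence each of the four sums vanishes.

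The main obstacle is the bookkeeping of exponents of $(w-1)$. Each ${\mathcal P}_j$ contributes $(w-1)^{-1}$ and each ${\mathcal T}_j^{u}$ contributes $(w-1)^{u-1}$, so the residue condition must be matched precisely with the definition of $l(W)-n(W)$; I would check this on short words first. The secondary delicate point is the claim in case (b) that the only eigenvalue inside $\gamma'$ is $1$, which relies on the regularity hypothesis.
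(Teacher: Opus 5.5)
Your route is the paper's route: the Riesz projection $\mathcal P_j(\hat x)$, the identity $(A(\hat x)-I)^p\mathcal P_j(\hat x)=0$ with $p=1$ resp.\ $p=l_j$, the Neumann series combined with \eqref{w-A 1 inverse}, residues, and finally compression by $\mathcal P_j$ and $I-\mathcal P_j$. Your argument that $1$ is the only eigenvalue of $A(\hat x)$ inside $\gamma'$ is correct. The paper leaves this point implicit, and it is needed in case (a) as well.

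The gap is at the step you flagged, and it changes the content of the result. With the factor $(w-1)^{p-l(W)+n(W)}$ that you write down, the residue is nonzero iff $l(W)-n(W)=p+1$, not $p$. For $p=1$, the word $\mathcal P_j\alpha\mathcal P_j$ carries $(w-1)^{-1}$ and survives, while $\mathcal P_j\alpha\mathcal T_j$ carries $(w-1)^0$ and drops out. There is also a sign problem. Each letter $\mathcal T_j^u$ enters \eqref{w-A 1 inverse} with a minus sign, so the residue theorem yields $\sum(-1)^{\nu(W)}W=0$, where $\nu(W)$ is the number of $\mathcal T$-letters. Words of different parity occur in the same class, so the signs do not factor out.

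A concrete test shows both problems. Take $A_1=\mathrm{diag}(1,2)$, $A_2=E_{12}+E_{21}$ (matrix units) and $j=1$. Then $\sigma_p=\{(x_1-1)(2x_1-1)=x_2^2\}$ satisfies all hypotheses. Here $\mathcal P_1=E_{11}$, $\mathcal T_1=E_{22}$ and $x_{1,1}(x_2)=1+x_2^2+O(x_2^4)$, so $\alpha_2=A_2$ and the $x_2^2$-Taylor coefficient is $\beta=A_1$.
Your condition $l-n=1$ gives $\Omega^1_{\mathcal P\mathcal T}(1)=\{\mathcal P_1\alpha_2\mathcal T_1\}$, and hence the false identity $E_{12}=0$. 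With the corrected count but no signs, $\Omega^1_{\mathcal P\mathcal P}(2)=\{\mathcal P_1\beta\mathcal P_1,\ \mathcal P_1\alpha_2\mathcal T_1\alpha_2\mathcal P_1\}$ would give $2E_{11}=0$. The true residue is $E_{11}-E_{11}=0$.

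To repair the proof, run your argument with $\Omega^j$ and $\widehat\Omega^j$ defined by $l(W)-n(W)=2$, resp.\ $l_j+1$. Use signed sums, or equivalently take $-\mathcal T_j^u$ as the letters. Take $\beta$ to be the actual Taylor coefficients; the factor $1/s!$ is missing from \eqref{beta}, and without it the example fails even with signs. Your final four-way compression is linear and goes through unchanged.

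The paper's definitions \eqref{Omega self}, \eqref{Omega not self} and \eqref{signature words} contain the same off-by-one and drop the same signs. So the identities cannot be proved literally as written, and matching those definitions is what led your residue step astray. The instances actually used in Section~\ref{main} are unaffected. These are $\Omega^i_{\mathcal P\mathcal P}(1)=\{\mathcal P_i\alpha\mathcal P_i\}$ and the word with $k+1$ letters $\mathcal P_{j_0}$ in $\widehat\Omega^{j_0}_{\mathcal P\mathcal P}(k)$. Both already follow the corrected count and contain no $\mathcal T$-letters.
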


\vspace{.2cm}

\subsection{Admissible transformations and tuples}

\vspace{.2cm}
In the previous subsection  we noted that without loss of generality we might assume that $A_1$ was invertible. The same argument shows that we might assume that all $A_1,...,A_m$ are invertible. 
In this case the intersection of $\sigma(A_1,...,A_m)$ with the $j$-th coordinate projective line 
lies in $U_{m+1}$ for all $j$, that is  the intersection of the determinantal variety with each projective coordinate line 
$$L_j=\big\{ [0:\cdots :x_j: 0: \cdots :1]: \ x_j\in \C \big\}$$
belongs to $\sigma_p(A_1,...,A_m)$.
%detrminantla variety of the tuple belongs to the chart $\{x_{m+1}\neq 0\}$ in $\C\Pp^m$, which we identify with $\C^m$.

Again, suppose that the proper projective joint spectrum of the tuple  is given by \eqref{spectrum}.
\begin{comment}
\begin{eqnarray}
\sigma_p(A_1,...,A_m)=\big\{x\in \C^m: \ R_1(x)^{m_1} R_2(x)^{m_2}\cdots R_k(x)^{m_k}=0\big\}, \label{polynomial}
\end{eqnarray}
where each $R_s$ is an irreducible polynomial of degree $r_s$. Thus, we have $r_1m_1+\cdots +r_km_k=N$, and 
%The invertibility of $A_1,...,A_n$ implies that for each $1\leq s\leq k$ 
%and $1\leq j\leq n$ 
the intersection of the algebraic variety $\{R_s=0\}\subset \C^{n}$ with the coordinate projective line 
 consists of $r_s$ points counting multiplicity. 
\end{comment}
\vspace{.1cm}

In general, there is no analog of the spectral mapping theorem for determinantal varieties. However, we can give an explicit expression for  the determinantal variety of a tuple that is   the image  of $(A_1,...,A_m)$ under a linear transformation.

For an $m\times m$ complex matrix $C=[c_{ij}]_{i,j=1}^m$, consider the tuple transformation:
\begin{equation}\label{transform}
\widehat{A}_j=\displaystyle \sum_{s=1}^m c_{js}A_s, \ j=1,...,m. 
\end{equation} 

The following observation is straightforward (see \cite{ST}):

\begin{equation}\label{spectrum transform}
\sigma_p(\widehat{A})=\Big\{ \displaystyle \prod_{l=1}^k R_l^{s_l}(C^Tx)=0\Big\},
\end{equation}
%where $\mathscr{C}$ is the $(n+1)\times (n+1)$ matrix obtained from $C$ by adding the $(n+1)$-th  column $(0,...,0,1)^T$ and the $(n+1)$-th row $(0,...,0,1)$.

We remark that:

\begin{itemize}

\item[(1)] If $C$ is invertible, then the polynomials $R_l(C^Tx)$ are irreducible if and only if
$R_l(x)$ are irreducible.

\item[(2)] If $C$ is invertible, then a subspace is invariant under the action of $A_1,...,A_m$ if and only if it is invariant for $\widehat{A}_1,...,\widehat{A}_m$.

\end{itemize}

\begin{definition}\label{admissible}
We call a transformation \eqref{transform} \textit{admissible}, if
\begin{itemize}
\item[(1)] the matrix $C$ is invertible and, if the tuple $A$ consists of self-adjoint matrices, $C$ is real-valued;
\item[(2)] for each $1\leq j\leq n$, at every point of the intersection of $\sigma_p(\widehat{A}) $ with $L_j$, the derivative of \ $ \prod_{l=1}^k R_l(C^Tx)$ with respect to $x_j$ is not equal to 0.
\end{itemize}	
\end{definition}
\noindent Part (2) of the above definition indicates that every point of the intersection of $\sigma_p(\widehat{A})$ with $L_j$ is a regular point of the algebraic variety $\{ \prod_{l=1}^k R_l(C^Tx)=0\}$. In the sequel, tuples with this spectral property will be called {\em admissible tuples}. Since every regular point has multiplicity 1, we see that for admissible tuples the intersection of $\Gamma_s=\big\{R_s(x)=0\}$ with $L_j$ consists of $r_s$ distinct points, and these sets of points are different for different $s$. The following Theorem was proved in \cite[Theorem 1.11]{S} for  self-adjoint tuples, and in \cite[Theorem 2.1]{SY} in general case.

\begin{theorem}\label{admissible tr}
For every tuple of invertible $N\times N$ matrices $A_1,...,A_m$,  admissible transformations form an open and dense set in every neighborhood of the identity matrix $I_m$.	
\end{theorem}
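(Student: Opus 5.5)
The plan is to prove density and openness separately, working throughout with the explicit formula \eqref{spectrum transform}. Write $F(x)=\prod_{l=1}^k R_l(x)$ for the reduced (square-free) defining polynomial of $\sigma_p(A)$, and set $F_C(x)=F(C^Tx)$. Condition (1) of Definition~\ref{admissible} is clearly open near $I_m$, since invertibility is an open condition and the real-valuedness requirement only restricts us to the real form of the group. The work therefore lies in condition (2).

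Restricted to the line $L_j$, condition (2) asks that the one-variable polynomial $t\mapsto F(C^T t e_j)=F(t c_j)$ have only simple roots among its finite roots. Here $c_j$ is the $j$-th row of $C$, so that $C^Te_j=c_j$. Indeed, the derivative $\partial_{x_j}F_C$ at $te_j$ equals $\frac{d}{dt}F(tc_j)$, and the roots of $t\mapsto F(tc_j)$ are exactly the intersection points of $\sigma_p(\widehat A)$ with $L_j$.

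The condition therefore becomes the following requirement on the direction $v=c_j$. Let $g_v(t)=F(tv)$. Since $F(0)=\det(-I)\neq 0$, the finite roots of $g_v$ are nonzero. We need the discriminant of $g_v$ to be nonzero, taken after accounting for a possible drop in degree; since the $A_s$ are invertible near $I_m$, the leading coefficient $F^{top}(v)$ is nonzero for $v$ near $e_j$, so the degree does not drop. That discriminant is a polynomial $D(v)$ in the coefficients of $g_v$, hence a polynomial in $v$.

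\textbf{Openness.} Near $C=I_m$ the degree of $g_{c_j}$ is constant and its roots are simple. Simple roots persist under small perturbation by continuity of roots (Hurwitz/Rouch\'e). Hence the admissible set is open. More concretely, it is the complement of the zero sets of finitely many polynomials $D(c_j)$, $j=1,\dots,m$, intersected with $\{\det C\neq 0\}$.

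\textbf{Density.} Since the complement of the zero set of a polynomial that is not identically zero is dense (also in $\R^{m\times m}$ for real polynomials), it suffices to show that $D\not\equiv 0$ as a polynomial in $v$. This is the main obstacle. Equivalently, we must show that a generic line through the origin $0\notin\{F=0\}$ meets the reduced hypersurface $\{F=0\}$ transversally at $\deg F$ distinct points. I would argue this by a Bertini/Sard-type argument. The set of lines through the origin meeting $\mathrm{Sing}(\{F=0\})$, which has codimension $\geq 2$ because $F$ is square-free, is a proper subvariety of the space of directions. For lines avoiding the singular locus, tangency at a smooth point $p$ means $\nabla F(p)\cdot p=0$. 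Consider the map from the smooth locus $\{F=0\}_{reg}$ to $\C\Pp^{m-1}$ given by $p\mapsto[p]$. By Sard's theorem (or generic smoothness in characteristic $0$), its critical values form a proper subvariety, and the critical points are exactly the points of tangency. Hence a generic direction $v$ gives a line meeting $\{F=0\}$ transversally, so $D(v)\neq 0$, and $D\not\equiv 0$. In the self-adjoint case, where $C$ must be real, $D$ is a nonzero polynomial, so it cannot vanish on any open subset of $\R^m$, and density still holds. Finally, for perturbations of distinct rows $c_j$ the choices are independent, so the conditions for $j=1,\dots,m$ are simultaneously satisfied on an open dense set in every neighborhood of $I_m$, intersected with the open dense set $\{\det C\neq 0\}$.

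One subtlety I would check is that distinct components $R_l$ meet $L_j$ at distinct points. This is already included, since $F$ is the product of the distinct irreducible $R_l$, and a common root of two of them would be a multiple root of $g_v$.
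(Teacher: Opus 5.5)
The paper does not prove this theorem. It quotes it from \cite[Theorem 1.11]{S} for the self-adjoint case and from \cite[Theorem 2.1]{SY} in general, so there is no in-text argument to compare your route with.

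Your proposal is a correct, self-contained proof. The reduction is right. With $c_j$ the $j$-th row of $C$, you have $P_{\widehat A}(te_j)=P_A(tc_j)$, and $\partial_{x_j}[F(C^Tx)]$ at $x=te_j$ equals $\frac{d}{dt}F(tc_j)$. So on a neighborhood of $I_m$ where every $\widehat A_j=\sum_s c_{js}A_s$ stays invertible (equivalently $F^{\rm top}(c_j)\neq 0$), condition (2) of Definition~\ref{admissible} is exactly $D(c_j)\neq 0$.

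This is what your approach buys. It shows the non-admissible set there is the zero set of the single polynomial $\det C\cdot\prod_j D(c_j)$. That gives openness and density at once, and the real case follows because a nonzero polynomial cannot vanish on an open subset of $\R^{m\times m}$. It also isolates the one geometric input, $D\not\equiv 0$: a generic line through $0\notin\{F=0\}$ meets the reduced hypersurface transversally. Your Sard argument for the central projection $p\mapsto[p]$ on the smooth locus, together with $\dim\mathrm{Sing}\{F=0\}\le m-2$, correctly supplies this.

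There are two points of presentation to fix.

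First, restricting to a small neighborhood where $\deg g_{c_j}$ is constant is essential, not cosmetic. Farther from $I_m$, roots escaping to infinity can collide, and openness genuinely fails. Take $A_1=I_2$, $A_2=-I_2$, $A_3=\mathrm{diag}(1,2)$, so $F=(x_1-x_2+x_3-1)(x_1-x_2+2x_3-1)$. The matrix $C_0$ with rows $(1,1,0),(0,1,2),(0,0,1)$ satisfies Definition~\ref{admissible}; on $L_1$ there are no spectral points at all. Yet replacing its first row by $(1+\delta,1,0)$ gives $F(tc_1)=(\delta t-1)^2$, a double root, for every small $\delta\neq 0$. So the theorem must be read for sufficiently small neighborhoods of $I_m$, which is how you treat it.

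Second, the sentence ``near $C=I_m$ \dots{} its roots are simple'' should refer to a given admissible $C$, since $I_m$ itself need not be admissible. What actually proves openness is your explicit description of the admissible set as the complement of $\{\det C=0\}\cup\bigcup_j\{D(c_j)=0\}$.
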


It follows from  Theorem \ref{admissible tr} and our observation above that for every matrix tuple $A$ there is an arbitrarily close to $A$ admissible tuple with the same lattice of common invariant subspaces.

\subsection{Projections}

Let $A$ be a self-adjoint matrix with eigenvalues \newline 
$\lambda_1,...,\lambda_n$.  Equation \eqref{projection for A 1} determines the orthogonal projection $\mathcal{P}_j$ onto the $\lambda_j$-eigenspace of $A$. The following result was proved in \cite[Proposition 3.2 ]{SY}.

\begin{proposition}\label{projections in algebra}
\begin{equation}\label{pr}
{\mathcal P}_j=\frac{1}{\displaystyle \prod_{r\neq j}(\lambda_j-\lambda_r)} \displaystyle \prod_{r\neq j}(A-\lambda_rI). 
\end{equation}
\end{proposition}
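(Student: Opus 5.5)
The plan is to use the spectral resolution \eqref{spectral resolution A 1} of the self-adjoint matrix $A$, namely $A=\sum_{r=1}^n \lambda_r{\mathcal P}_r$. Here ${\mathcal P}_1,\dots,{\mathcal P}_n$ are mutually orthogonal projections with ${\mathcal P}_r{\mathcal P}_s=\delta_{rs}{\mathcal P}_r$ and $\sum_r {\mathcal P}_r=I$. We may also assume the eigenvalues $\lambda_1,\dots,\lambda_n$ are the distinct ones, so that every denominator $\lambda_j-\lambda_r$ with $r\neq j$ is nonzero and the right-hand side of \eqref{pr} makes sense.

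First, for each fixed $r$ I write
$$A-\lambda_rI=\displaystyle\sum_{s=1}^n(\lambda_s-\lambda_r){\mathcal P}_s.$$
Since the ${\mathcal P}_s$ commute and are orthogonal idempotents, a product of such expressions is computed coordinatewise in the basis of projections. Hence for any polynomial $f$ we get $f(A)=\sum_s f(\lambda_s){\mathcal P}_s$; this is the functional calculus, which follows by induction on the degree. I apply it to
$$f(t)=\prod_{r\neq j}(t-\lambda_r).$$
This gives
$$\prod_{r\neq j}(A-\lambda_rI)=\sum_{s=1}^n\Big(\prod_{r\neq j}(\lambda_s-\lambda_r)\Big){\mathcal P}_s.$$
For $s\neq j$ the coefficient contains the factor $r=s$ and so vanishes. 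For $s=j$ the coefficient is $\prod_{r\neq j}(\lambda_j-\lambda_r)\neq0$. Dividing by this nonzero coefficient yields \eqref{pr}.

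As an alternative route, one could evaluate the contour integral \eqref{projection for A 1} by writing $(w-A)^{-1}=\sum_s (w-\lambda_s)^{-1}{\mathcal P}_s$ and applying residues. However, the Lagrange-interpolation argument above is more direct. There is no real obstacle. The only point needing care is that the $\lambda_r$ must be the \emph{distinct} eigenvalues: with repeated values the product would vanish identically on the $\lambda_j$-eigenspace. Multiplicities $l_j$ play no role, since they affect only the ranks of the ${\mathcal P}_j$.
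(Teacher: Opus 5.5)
Your argument is correct and complete. The paper gives no proof of its own here; it simply cites \cite[Proposition 3.2]{SY}, so the only comparison available is that you supply a self-contained argument where the paper defers to the reference.

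Your route is the standard Lagrange-interpolation (polynomial functional calculus) proof. You expand each factor as $A-\lambda_r I=\sum_s(\lambda_s-\lambda_r)\mathcal{P}_s$ and observe that the product over $r\neq j$ annihilates every $\mathcal{P}_s$ with $s\neq j$. The only input is the spectral resolution \eqref{spectral resolution A 1} with the $\lambda_r$ distinct. The paper asserts that this resolution agrees with the contour-integral definition \eqref{projection for A 1}, so your identification of $\mathcal{P}_j$ is legitimate in its setting.

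Your computation also never uses self-adjointness beyond diagonalizability. It works verbatim for any diagonalizable $A$ with $\mathcal{P}_j$ its Riesz projections.
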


\section{\textbf{Main result}}\label{main}

\vspace{.3cm}

Let $A_1,...,A_m$ be self-adjoint $nk\times nk$ matrices., and suppose that 
\begin{equation}\label{small spectrum}
\sigma_p(A)=\Big\{[x_1:\cdots :x_{m}]\in \C^m: \ \big(R(x_1,...,x_{m+1})\big)^k=0 \Big\}, 
\end{equation}
where $R$ is a polynomial of degree $n$.
We will be dealing with the following subset of the free algebra generated by $A_1,...,A_m$. Consider words in this algebra that are in the form
\begin{equation}\label{W}
W=Q_1S_1Q_2S_2\cdots Q_rS_rQ_{r+1}, 
\end{equation}
where $r\leq n-1$,  each $Q_j $ is one of $\{ A_2,...,A_m\}$,  each of $S_j$ is one of $\{ \Po_1,...,\Po_n\}$ given by \eqref{projection for A 1}, and all $S_1,...,S_r$ are different. Proposition \ref{projections in algebra} shows that each such word is a matrix which belongs to the algebra generated by $A_1,...,A_m$.

We denote by $\mathscr{W}$ the collection of all such words. Of course, 
$$A_2,...,A_m  \in \mathscr{W}.$$ 
%We will denote by $\sigma(\mathscr{W})$ and $\sigma_p(\mathscr{W})$ the determinantal variety and the proper projective joint spectrum of the matrices in $\mathscr{W}$.

\vspace{.2cm}

The following Theorem combined with  Corollary \ref{cor} below is our main result. As mentioned in the previous section, without loss of generality we assume that all matrices $A_1,...,A_m$  are invertible.

\begin{theorem}\label{main theorem}
Let $(A_1,...,A_m)$ be an admissible tuple of $nk\times nk$ self-adjoint complex invertible matrices.  
Suppose that the projective joint spectrum, $\sigma_p(A)$ is given by \eqref{small spectrum}, 
where $R$ is a  polynomial of degree $n$ such that $R(x_1,0,...,0)$ has simple roots as a polynomial in $x_1$. 
The tuple $(A_1,...,A_m)$ is unitary equivalent to a tuple that is a direct sum of $k$ identical $m$-tuples of $n\times n$ matrices, if and only if for each $W\in \mathscr{W}$
\begin{eqnarray}
&\sigma_p(A_1,W) 
&=\bigg\{\bigg({\mathscr R}_W(x_1,x_2)\bigg)^k=0\bigg\}, \label{big spectrum}
\end{eqnarray}	
where ${\mathscr R}_W$ is a polynomial of degree $n$.
\end{theorem}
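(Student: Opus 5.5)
The plan is to get necessity from the block structure directly, and sufficiency by extracting scalar identities from Theorem \ref{local spectral} and gluing them into a decomposition. Throughout, \eqref{small spectrum} and the simplicity of the roots of $R(x_1,0,\dots,0)$ imply that $A_1$ has exactly $n$ distinct eigenvalues $\lambda_1,\dots,\lambda_n$, each of multiplicity $k$. So every $\Po_j$ has rank $k$, and $\C^{nk}=\bigoplus_j \mathrm{Ran}\,\Po_j$.

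\emph{Necessity.} Suppose $A_i\cong B_i\oplus\cdots\oplus B_i$ ($k$ copies) for all $i$. Every element of the algebra generated by $A_1,\dots,A_m$ then has the same block form. By Proposition \ref{projections in algebra} this applies to every $\Po_j$, and hence to every $W\in\mathscr{W}$. Therefore $\det(x_1A_1+x_2W-I)=\big(\det(x_1B_1+x_2W_B-I)\big)^k$, which is \eqref{big spectrum}. A degree count shows that $\mathscr R_W$ has degree $n$.

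\emph{Sufficiency, step 1: scalar compressions.} Fix $W\in\mathscr{W}$ and apply Theorem \ref{local spectral}(a) to the self-adjoint pair $(A_1,W)$; $W$ is self-adjoint because the $A_i$ and $\Po_j$ are, and words can be symmetrized. Regularity at $\tau_j=(1/\lambda_j,0)$ should come from \eqref{big spectrum}. Indeed, $\mathscr R_W(x_1,0)^k=\prod_j(1-\lambda_jx_1)^k$, so $\mathscr R_W(x_1,0)$ has the simple roots $1/\lambda_j$ and $\partial_{x_1}\mathscr R_W(\tau_j)\neq 0$. (If necessary, first perturb by a real admissible transformation using Theorem \ref{admissible tr}.) The lowest-order relation, with signature $(1)$ in $\Omega^j_{\Po,\Po}$, reads $\Po_j\alpha^j\Po_j=0$ with $\alpha^j=W-x_{1j}'(0)A_1$. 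This gives
\[
\Po_jW\Po_j=c_j(W)\,\Po_j \qquad\text{for all } W\in\mathscr{W},\ j=1,\dots,n .
\]

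\emph{Step 2: partial isometries.} Set $B^{(a)}_{ij}=\Po_iA_a\Po_j:\mathrm{Ran}\,\Po_j\to\mathrm{Ran}\,\Po_i$. Apply step 1 to $W=A_a\Po_iA_b$ (here $r=1$). This gives $(B^{(a)}_{ij})^*B^{(b)}_{ij}=c\,I$ on $\mathrm{Ran}\,\Po_j$. Since $\dim\mathrm{Ran}\,\Po_i=\dim\mathrm{Ran}\,\Po_j=k$, every nonzero $B^{(a)}_{ij}$ is a positive scalar times a unitary $V^{(a)}_{ij}$. Any two nonzero such maps for the same pair $(i,j)$ are proportional.

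\emph{Step 3: loop consistency.} Take words $A_{a_1}\Po_{i_1}A_{a_2}\Po_{i_2}\cdots\Po_{i_r}A_{a_{r+1}}$ with distinct $i_1,\dots,i_r$ and $r\le n-1$, and compress by $\Po_j$. Step 1 then says that the product of the $B$'s along any simple closed path $j\to i_r\to\cdots\to i_1\to j$ is a scalar. Form the graph on $\{1,\dots,n\}$ with an edge $i\sim j$ whenever some $B^{(a)}_{ij}\neq0$. In each connected component pick a root $j_0$ and a spanning tree, and define unitaries $U_j:\mathrm{Ran}\,\Po_{j_0}\to\mathrm{Ran}\,\Po_j$ by composing the $V$'s along tree paths. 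Every non-tree edge closes a simple cycle, so the scalar-cycle condition gives $B^{(a)}_{ij}=\mu^{(a)}_{ij}\,U_iU_j^{*}$ for all $i,j,a$. Here only simple cycles are needed, which is why words with distinct $S$'s and $r\le n-1$ suffice.

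\emph{Step 4: the decomposition.} In each component choose an orthonormal basis $u_1,\dots,u_k$ of $\mathrm{Ran}\,\Po_{j_0}$. Set $H_s=\mathrm{span}\{U_ju_s\}$, taking $j$ over all components (with their respective roots). These spaces are mutually orthogonal, $n$-dimensional, and sum to $\C^{nk}$. Each is invariant under $A_1$, and under every $A_a=\sum_{i,j}B^{(a)}_{ij}$. In the bases $\{U_ju_s\}_j$, the matrices are $\lambda_j\delta_{ij}$ and $\mu^{(a)}_{ij}$, independently of $s$. This yields $k$ identical summands.

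I expect the main obstacle to be step 1, namely justifying that Theorem \ref{local spectral} applies to $(A_1,W)$ for every $W\in\mathscr{W}$. This involves the regularity/admissibility at each $\tau_j$, the possible non-invertibility of $W$, and the sign bookkeeping in $\alpha^j$. Step 3 is the combinatorial core, where one must check carefully that simple cycles generate all the needed consistency.
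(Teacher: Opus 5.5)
Your architecture matches the paper's: scalar diagonal compressions, off-diagonal blocks $\Po_iA_a\Po_j$ that are multiples of unitaries, scalar products around cycles (Lemmas \ref{relations} and \ref{relations1}), and a unitary change of basis inside each $\mathrm{Ran}\,\Po_j$. Your spanning-tree construction of the $U_j$ is a cleaner replacement for the paper's extension of $\mathscr{I}$ and Lemma \ref{all i j}. Your regularity argument via $\mathscr R_W(x_1,0)^k=c\prod_j(\lambda_jx_1-1)^k$ is correct. The suggested perturbation by an admissible transformation is therefore unnecessary, and it would be harmful, since \eqref{big spectrum} is assumed only for words of the original tuple.

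\textbf{The gap is in Step 1.} Words in $\mathscr{W}$ are not self-adjoint in general. For instance, $(A_a\Po_iA_b)^*=A_b\Po_iA_a$ and $(A_2\Po_1A_2\Po_2A_2)^*=A_2\Po_2A_2\Po_1A_2$. Symmetrizing is not available: \eqref{big spectrum} is assumed only for elements of $\mathscr{W}$, and $W+W^*$ is not one of them. So Theorem \ref{local spectral}(a) applies only to self-adjoint words such as $A_a$ and $A_a\Po_iA_a$. For a general $W\in\mathscr{W}$ you must use part (b). That only gives that $\Po_jW\Po_j-c\,\Po_j$ is \emph{nilpotent} on $\mathrm{Ran}\,\Po_j$, not that it vanishes. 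Consequently your claim $\Po_jW\Po_j=c_j(W)\Po_j$ for all $W\in\mathscr{W}$ is unproved at that stage. So are the $a\neq b$ proportionality in Step 2 and the scalar-cycle condition in Step 3, which both rest on non-self-adjoint words.

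\textbf{The fix} is the argument in the paper's proof of Lemma \ref{relations}.
\begin{enumerate}
\item Use only the self-adjoint words $A_a$ and $A_a\Po_iA_a$. They give that $\Po_jA_a\Po_j$ is scalar, and that each nonzero $B^{(a)}_{ij}$ equals $\rho^{(a)}_{ij}V^{(a)}_{ij}$ with $\rho^{(a)}_{ij}>0$ and $V^{(a)}_{ij}$ unitary.
\item For $A_a\Po_iA_b$ and for your cycle words, the $\Po_j$-compression is then a nonzero scalar times a unitary.
\item Part (b) says this compression is a scalar plus a nilpotent, so the unitary has a single eigenvalue. Being diagonalizable, it is a scalar multiple of the identity.
\end{enumerate}
With this substitution, your Steps 2--4 go through as written.
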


\vspace{1cm}
 
%\vspace{.2cm}
We prove this Theorem in three steps.
\subsection{Necessity}

Suppose that in some orthonormal basis each matrix of the tuple $(A_1,...,A_m)$ is block-diagonal and consists of $k$ identical  diagonal blocks of dimension $n\times n$. Then, obviously, the same is true for the matrices  in $\mathscr{W}$, and, therefore, for each $W\in \mathscr{W}$ \eqref{big spectrum} holds for some polynomial ${\mathscr R}_W$ of degree $n$.

\subsection{Sufficiency }
Since the eigenvalues of $A_1$ are the reciprocals of the roots of $R(x_1,0,...,0)$ the eigenvalues of $A_1$, which we denote by \newpage  
$\lambda_1,...,\lambda_n$, satisfy $\lambda_i\neq \lambda_j, \ i\neq j$ and each of them has multiplicity $k$, so that in an appropriate orthonormal basis $ e_1,...e_{nk}$, \ $A_1$ is a diagonal matrix in the form
\begin{equation}\label{A 1}
A_1=\left [ \begin{array}{cccc}\lambda_1I_k & 0 & \cdots &0\\ 0 & \lambda_2I_k& \cdots & 0\\ \cdot & \cdot & \cdot & \cdot \\ 0 & 0 &\cdots & \lambda_nI_k\end{array}\right ], 
\end{equation}
where $I_k$ is the $k\times k$ identity matrix. In this basis $\mathcal{P}_j$ corresponds to $\lambda_jI_k$ entry in \eqref{A 1}.

We have $\tau_j=(1/\lambda_j,0,...0)\in \sigma_p(A_1,...,A_m)$ and, since the tuple $(A_1,...,A_m)$ is admissible, 
%$$\frac{\partial R}{\partial x_1}\Big|_{\tau_j}\neq 0. $$ 
and since for each $ W\in \mathscr{W}$
$$\eta_j=(1/\lambda_j,0)\in \sigma_p(A_1,W), $$
we see that
\begin{equation}\label{regular W}
\frac{\partial {\mathscr R}_W}{\partial x_1}\Big|_{\eta_j}\neq 0. 
\end{equation}
Indeed, 
$$\frac{\partial {\mathscr R}_W}{\partial x_1}\Big|_{\eta_j}=\frac{\partial R}{\partial x_1}\Big|_{\tau_j}=\lambda_j \displaystyle \prod_{l\neq j} \bigg(\frac{\lambda_l}{\lambda_j}-1\bigg)\neq 0$$

\begin{comment}
Further, the determinantal variety of any subtuple of the tuple  $\mathscr{W}$ is given by the intersection of $\sigma_p(\mathscr{W})$ 
with the coordinate plane where the variables corresponding to matrices not included in the subtuple are set to 0.
Thus, the characterisic polynomial  of every subtuple is also a $k$-th power of some polynomial of degree $n$. 
\end{comment}
\subsubsection{$\bf{m=2}$}

First, we apply Theorem \ref{local spectral} to the proper projective joint spectrum of $(A_1,A_2)$.  Fix $1\leq i\leq n$. The set $\Omega^i_{{\mathcal P}{\mathcal P}}(1)$ consists of the single word ${\mathcal P}_i (A_2+\frac{\partial x_{1j}}{\partial x_2}(0)A_1){\mathcal P}_i$,  so that equation  \eqref{local self} implies
\begin{equation}\label{moment 1 for 2}
\pr_i\big(A_2+ \frac{\partial x_{1i}}{\partial x_2}(0)A_1\big)\pr_i=0.
\end{equation}
Since ${\mathcal P}_iA_1=A_1{\mathcal P}_i={\mathcal P}_i$, we obtain
\begin{equation}\label{first moment for 2}
{\mathcal P}_iA_2{\mathcal P}_i=c_{ii}\Po_i,	
\end{equation}
where $c_{ii}=-\frac{\partial x_{1i}}{\partial x_2}(0)$. 
Thus, the compression of $A_2$ to the range of ${\mathcal P}_i$ is $c_{ii}I_k $.

Let $i\neq j$. Consider the pair $(A_1,W)$ where $W= A_2\Po_j A_2$. Equation \eqref{big spectrum} implies that $\sigma_p(A_1,A_2\Po_jA_2)$ is in the form
$$\sigma_p(A_1,A_2\Po_jA_2)= \{( R_j(x_1,x_2))^k=0\}, $$
where $R_j(x_1,x_2)$ is a polynomial of degree $n$. Equation \eqref{regular W} shows that
$$ \frac{\partial R_j}{\partial x_1}\Big|_{\tau_i}\neq 0,$$
so that Theorem \ref{local spectral} is applicable.
Since $\Po_j$ is self-adjoint, the pair $(A_1,A_2\Po_jA_2)$ is self-adjoint. We apply Theorem \ref{local spectral} part (a) and obtain
$$\Po_iA_2\Po_jA_2\Po_i=-\frac{\partial x_{1i}^j}{\partial x_2}(0)\Po_i, $$
where $ x_1=x_{1i}^j(x_2)$ is the implicit function that represents $\{R_j(x_1,x_2)=0\}$ in a small neighborhood of $(1/\lambda_i,0)$. To shorten our future notation write 
\begin{equation}\label{c i j}
c_{ij}^2 =-\frac{\partial  x_{1,i}^j}{\partial x_2}(0).
\end{equation}
Then, since $\Po_j^2=\Po_j$, 
$$(\Po_i A_2\Po_j)(\Po_i A_2\Po_j)^*= c_{ij}^2\Po_i$$

Now, $(\mathcal{P}_iA_2\mathcal{P}_j)(\mathcal{P}_iA_2\mathcal{P}_j)^*$ is a positive  matrix. Equation \eqref{c i j} shows that $c_{ij}^2\geq 0$ (and that explains our notation). Hence, we might choose  $c_{ij}\geq 0$, and, 
\begin{equation}\label{unitary block} 
\mbox{either} \ \mathcal{P}_iA_2\mathcal{P}_j=0 \ (\mbox{when} \ c_{ij}=0), \ \mbox{or} \ \mathcal{P}_iA_2\mathcal{P}_j=c_{ij}\mathcal{U}_{ij}, 
\end{equation}
where $\mathcal{U}_{ij}$ is a matrix comprised of $n\times n$ blocks of size $k\times k$ each, with all the blocks except for the $(ij)$-th one being 0, and the $(ij)$-th block, which we denote by $u_{ij}$, being a unitary $k\times k$ matrix. 
Since $A_2$ is self-adjoint,  $u_{ji}=u_{ij}^*$. We also set $u_{ii}=I_k$, which is justified by \eqref{first moment for 2}.

If $c_{ij}=0$ for all $i\neq j$, then by \eqref{first moment for 2} the matrix $A_2$ is block diagonal with $k\times k$ diagonal blocks, each being a scalar multiple of $I_k$. In this case the claim of Theorem \ref{main theorem} is obviously true, as here $A_1$ and $A_2$ commute, $\sigma_p(A_1,A_2)$ is comprised of lines, and this is one of the cases worked out by Kippenhahn \cite{Ki} (see also \cite{MT1, MT2}). 

Suppose that $c_{ij}\neq0$ for some pairs $(ij), \ i\neq j$.
It is important to mention that while $c_{ij}$ are defined for all $(ij)	$, at this point 
the matrices $u_{ij}$ are defined only for those pairs $(ij)$ where $c_{ij}\neq 0$, and where $i=j$. We will need them defined for all $(ij)$, which will allow us to construct a unitary transformation that brings $A_2$ to a $k\times k$ block form with all $k\times k$ blocks being diagonal. There is  a flexibility here which allows us to define these matrices up to a unimodular scalar multiple.    We  need 
the following Lemma. 

%\vspace{.2cm}

%\LARGE{Check the statement of Lemma}\normalsize

%\vspace{.5cm}

\begin{lemma}\label{relations}
Let $j_0,...j_r$ be distinct numbers between $1$ and $n$.  	Set $j_{r+1}=j_0 $. If $c_{j_lj_{l+1}}\neq 0, \ t=0,...,r$, then
\begin{equation}\label{cycles}
\displaystyle \prod_{l=0}^r u_{j_l j_{l+1}}=e^{i\theta}I_k .	
\end{equation}
\end{lemma}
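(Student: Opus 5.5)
The plan is to use the hypothesis \eqref{big spectrum} for the cyclic word
$$W=A_2\Po_{j_1}A_2\Po_{j_2}\cdots A_2\Po_{j_r}A_2 \in \mathscr{W}.$$
Here $r\leq n-1$ and the projections $\Po_{j_1},\dots,\Po_{j_r}$ are distinct, so $W$ is admissible. The goal is to extract the compression $\Po_{j_0}W\Po_{j_0}$ by local spectral analysis and then compute it directly from \eqref{unitary block}.

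\textbf{Step 1: the self-adjoint case.} First note that $W$ is self-adjoint exactly when the sequence is palindromic, which a general cycle is not. So when $W$ happens to be self-adjoint I would apply Theorem \ref{local spectral}(a) to the pair $(A_1,W)$. This is legitimate because \eqref{regular W} guarantees the regularity hypothesis at $\eta_{j_0}=(1/\lambda_{j_0},0)$. Exactly as in the derivation of \eqref{first moment for 2}, the word set $\Omega^{j_0}_{\mathcal{P}\mathcal{P}}(1)$ consists of the single word $\Po_{j_0}(W+\partial x_1/\partial x_2(0)\,A_1)\Po_{j_0}$. This yields
$$\Po_{j_0}W\Po_{j_0}=d\,\Po_{j_0}$$
for a scalar $d$.

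\textbf{Step 2: the non-self-adjoint case.} In general I would apply part (b) instead. Since $\widehat{\Omega}^{j_0}_{\mathcal{P}\mathcal{P}}$ involves $l(W)-n(W)=l_{j_0}=k$, this is messier. The more economical route is to replace $W$ by a self-adjoint element of $\mathscr W$ whose compression still sees the cycle. I would try the following.

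1. Use $W+W^*$. This is not a word, so it is not directly covered by the hypothesis; instead I would apply \eqref{big spectrum} to the pair $(A_1,W)$ itself.
2. Use the fact that the characteristic polynomial $\det(x_1A_1+x_2W-I)$ is a $k$-th power. Look at its expansion near $\tau_{j_0}$ to first order in $x_2$. The eigenvalues of $x_1A_1+x_2W$ near $1$ split as $1+x_2\mu+\cdots$, where $\mu$ runs over the eigenvalues of the compression $\Po_{j_0}W\Po_{j_0}$ on $\operatorname{ran}\Po_{j_0}$, a $k$-dimensional space. This is first-order perturbation theory with $A_1$ scalar on that space.
3. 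The regularity \eqref{regular W} says $\{\mathscr R_W=0\}$ is a single smooth branch $x_1=x_1(x_2)$ through $\eta_{j_0}$, counted $k$ times. Hence all $k$ first-order coefficients coincide, so $\Po_{j_0}W\Po_{j_0}$ has a single eigenvalue $d$.

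\textbf{Step 3: computing the compression.} By \eqref{unitary block}, with $\Po_{j_l}A_2\Po_{j_{l+1}}=c_{j_lj_{l+1}}\mathcal U_{j_lj_{l+1}}$, the compression is
$$\Po_{j_0}W\Po_{j_0}=\Big(\prod_{l=0}^r c_{j_lj_{l+1}}\Big)\,u_{j_0j_1}u_{j_1j_2}\cdots u_{j_rj_0}$$
on the $j_0$ block. Because every $c_{j_lj_{l+1}}\neq 0$, the product $U=\prod_l u_{j_lj_{l+1}}$ is a unitary $k\times k$ matrix. Step 2 shows it has a single eigenvalue, and a unitary matrix is diagonalizable, so $U=e^{i\theta}I_k$. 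That is \eqref{cycles}.

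\textbf{Expected obstacle.} The main difficulty is Step 2 when $W$ is not normal. There $\Po_{j_0}W\Po_{j_0}$ is still $c\,U$ with $U$ unitary, so normality of the compression is automatic. The real work is justifying that the eigenvalues $\mu$ of the compression are the first-order coefficients of the branches. This requires the reduction by the Riesz projection \eqref{projection for pencil} to a $k\times k$ problem and a Puiseux/first-order argument showing that a single branch of multiplicity $k$ forces a single $\mu$. If that argument resists, I would instead use the word $WW^*$-type self-adjoint words, or part (b) restricted to the leading term $\Po_{j_0}W\Po_{j_0}$, whose nilpotent part must vanish.
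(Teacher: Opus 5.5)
Your argument is correct. Its skeleton is the paper's: the same cyclic word $W=A_2\mathcal{P}_{j_1}A_2\cdots A_2\mathcal{P}_{j_r}A_2\in\mathscr{W}$, the same identification of $\mathcal{P}_{j_0}W\mathcal{P}_{j_0}$ with $\big(\prod_l c_{j_lj_{l+1}}\big)\prod_l u_{j_lj_{l+1}}$ on the $j_0$-block, and the same finish (a unitary matrix with a single eigenvalue is scalar). Where you differ is in proving that this compression has only one eigenvalue. The paper applies Theorem \ref{local spectral}(b) directly. In degree $k$ the only $\mathcal{P}\mathcal{P}$-word is $\big(\mathcal{P}_{j_0}(W+aA_1)\mathcal{P}_{j_0}\big)^k$, with $a$ the derivative at $0$ of the implicit function $x_1(x_2)$. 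So \eqref{local not self} makes $\mathcal{P}_{j_0}W\mathcal{P}_{j_0}+a\mathcal{P}_{j_0}$ nilpotent on the block, which is exactly the route you listed as a fallback. You instead combine first-order perturbation theory at the semisimple eigenvalue $1$ of $\lambda_{j_0}^{-1}A_1$ (valid also for non-normal perturbations) with the geometry of the branch. To make your item 3 airtight, state the homogeneity step explicitly. A number $t$ near $1$ is an eigenvalue of $\lambda_{j_0}^{-1}A_1+x_2W$ iff $\mathscr{R}_W\big(1/(\lambda_{j_0}t),\,x_2/t\big)=0$. By \eqref{regular W} and the implicit function theorem this equation has a unique simple root $t(x_2)$ near $1$. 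Hence $\det(\lambda_{j_0}^{-1}A_1+x_2W-tI)$ equals $(t-t(x_2))^k$ times a nonvanishing factor near $t=1$, so the $k$ eigenvalues coincide exactly and no Puiseux analysis is needed. The paper's route is immediate once the residue machinery is in place and yields an exact identity with an explicit scalar. Yours avoids the word combinatorics underlying Theorem \ref{local spectral}, at the price of quoting a standard Kato-type perturbation fact. Your Step 1 is redundant, since Step 2 also covers the self-adjoint case.
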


\vspace{.1cm}

%Let $i,j,r$ be 3 different numbers between $1$ and $n$. 
\begin{proof}

Let  $j_0,...j_r$ be distinct number between $1$ and $n$. Write 
$$W_{j_1...j_r}=A_2\pr_{j_1}A_2\pr_{j_2}\cdots A_2\pr_{j_r}A_2$$ 
and consider 
$\sigma(A_1, W_{j_1...j_r} )$. Once again, \eqref{regular W} shows that Theorem \ref{local spectral} is applicable, and we apply  its part (b) to  $\widehat{\Omega}_{\pr \pr}^{j_0}(k).$  The only word in this collection is
\small\begin{eqnarray*}
&\pr_{j_0}\big(W_{j_1...,j_r}+\frac{\partial x_{1r}}{\partial x_2}(0)A_1\big)\pr_{j_0} %\big(W_{j_1...,j_r}+\frac{\partial x_{1r}}{\partial x_2}(0)A_1\big)\pr_{j_0}
 \cdots \pr_{j_0}\big(W_{j_1...,j_r}+\frac{\partial x_{1r}}{\partial x_2}(0)A_1\big)\pr_{j_0} \\
&=\Big(\pr_{j_0}W_{j_1...j_r}\pr_{j_0}+\frac{\partial x_{1r}}{\partial x_2}(0)\pr_{j_0}\Big)^k.
\end{eqnarray*}\normalsize

Equation \eqref{local not self}  implies
\begin{equation}\label{nilpotent}
\Big(\pr_{j_0}W_{j_1...j_r}\pr_{j_0}+\frac{\partial x_{1r}}{\partial x_2}(0)\pr_{j_0}\Big)^k=0,	
\end{equation}

Since
\begin{eqnarray*}
&\pr_{j_0}W_{j_1...j_r}\pr_{j_0}=\pr_{j_0}A_2\pr_{j_1}A_2\pr_{j_2}\cdots A_2\pr_{j_r}A_2\pr_{j_0}\\
&=\displaystyle \prod_{l=0}^r\Big( \pr_{j_l}A_2\pr_{j_{l+1}}\Big)=\displaystyle \prod_{l=0}^r \Big(c_{j_l j_{l+1}}\mathcal{U}_{j_l j_{l+1}}\Big) ,\end{eqnarray*} 
 
Equation \eqref{nilpotent} shows that the $k\times k$ dimensional matrix
$$\bigg(\displaystyle \prod_{l=0}^r c_{j_l j_{l+1}}u_{j_l j_{l+1}}\bigg) + \frac{\partial x_{1r}}{\partial x_2}(0)I_k$$
is nilpotent. If  \ $\displaystyle \prod_{l=0}^r c_{j_lj_{l+1}} \neq 0$, it follows that 
\begin{equation}\label{unitary to constant}
\displaystyle \prod_{l=0}^r u_{j_l j_{l+1}} = c I_k+ \mbox{ a nilpotent matrix}, \ c\in \C.	
\end{equation}
The spectral mapping theorem implies that the spectrum  $\sigma\Big(\displaystyle \prod_{l=0}^r u_{j_l j_{l+1}}\Big)$ consists of a single point $c$ having multiplicity $k$. Since $\Big(\displaystyle \prod_{l=0}^r u_{j_l j_{l+1}}\Big)$ is a unitary matrix, $|c|=1$. Now, the spectral theorem yields \eqref{cycles}.
\end{proof}

\vspace{.1cm}

Let us denote by $\mathscr{I}$ the set of all pairs $(i,j)$ where $u_{ij}$ has already been defined (at this point they are the ones with $c_{ij}\neq 0$ plus all pairs $(i,i)$). Since $A_2$ is self-adjoint, $(i,j)\in \mathscr{I}$ if and only if $(j,i)\in \mathscr{I}$.

We will now use the following steps to add additional pairs to $\mathscr{I}$ and define matrices $u_{ij}$ for added pairs. 

Let $(i,s),(i,t)\in \mathscr{I}$. If $(s,t)$ is already in $\mathscr{I}$, we do not add anything to $\mathscr{I}$. If $(s,t)$ is not in $\mathscr{I}$, we define
\begin{equation}\label{amendment}
u_{st}=u_{si}u_{it}, \ u_{ts}=u_{st}^*=u_{ti}u_{is} 
\end{equation}
and add pairs $(s,t)$ and $(t,s)$ to $\mathscr{I}$. Observe, that Lemma \ref{relations} implies that this definition is consistent. Indeed, if $(i,s),(i,t),(j,s),(j,t)\in \mathscr{I}$, then according to \eqref{cycles} 
$$u_{si}u_{it}=e^{i\theta }u_{sj}u_{jt}, $$
so that $c_{st}u_{si}u_{it}=c_{st}u_{sj}u_{jt}$ up to a unimodular scalar multiple. We also observe that \eqref{amendment} and Lemma \ref{relations} imply that relation \eqref{cycles} holds for all $j_0,...,j_r$ such that for all $l$ the pair $(j_l,j_{l+1})$ is in the amended set $\mathscr{I}$.

We continue this process of extending  the set $\mathscr{I}$ and definition of matrices $u_{st}$. In a finite number of steps the set $\mathscr{I}$ stabilizes, and no new pairs can be added to it.  Let us denote by $\widehat{\mathscr{I}}$ this maximal set.

The following Lemma is our next step in the proof of Theorem \ref{main theorem} for $m=2$. 
%\vspace{.2cm}

\begin{lemma}\label{all i j}
There is a partition $\{1,...,n\}= \cup_{l=1}^s	 \mathcal{I}_l$, \ $\mathcal{I}_l=\{i_1^l,...,i_{r_l}^l\}$, \ $l=1,...,s$, \ $r_1+\cdots +r_s=n$ (and, so $\mathcal{I}_l\cap \mathcal{I}_t=\emptyset, \ l\neq t$) such that
$$ \widehat{\mathscr{I}}=\cup_{l=1}^s \widetilde{\mathscr{I}}_l,$$
where $\widetilde{\mathscr{I}}_l$ are given by  $\widetilde{\mathscr{I}}_l=\{ (i_p^l,i_q^l): \ 1\leq p,q\leq r_l\}$.
\end{lemma}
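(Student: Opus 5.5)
The plan is to read the construction of $\widehat{\mathscr{I}}$ as the closure of a graph and to show that this closure is an equivalence relation. Let $G$ be the graph on the vertex set $\{1,\dots,n\}$ in which $i$ and $j$ are joined by an edge exactly when $(i,j)\in\widehat{\mathscr{I}}$. The key claim is that the relation ``$(i,j)\in\widehat{\mathscr{I}}$'' is reflexive, symmetric and transitive. Once this is established, the partition $\mathcal{I}_1,\dots,\mathcal{I}_s$ is simply the set of equivalence classes. The equality $\widehat{\mathscr{I}}=\cup_l\widetilde{\mathscr{I}}_l$ is then immediate, since an equivalence relation contains all pairs within each class and no pairs across classes.

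\emph{Reflexivity} holds because all pairs $(i,i)$ are placed in $\mathscr{I}$ from the start, with $u_{ii}=I_k$.

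\emph{Symmetry} holds for the initial set because $A_2$ is self-adjoint. It is preserved at every stage because the amendment step \eqref{amendment} always adds $(s,t)$ and $(t,s)$ together.

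\emph{Transitivity} is where maximality of $\widehat{\mathscr{I}}$ is used. Suppose $(s,i),(i,t)\in\widehat{\mathscr{I}}$. By symmetry, $(i,s)\in\widehat{\mathscr{I}}$ as well, so both $(i,s)$ and $(i,t)$ lie in $\widehat{\mathscr{I}}$. If $(s,t)$ were missing, the amendment rule would add it together with $u_{st}=u_{si}u_{it}$. This contradicts the fact that the process has stabilized at $\widehat{\mathscr{I}}$. Hence $(s,t)\in\widehat{\mathscr{I}}$.

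The process does stabilize after finitely many steps, since each step adds new pairs from the finite set $\{1,\dots,n\}^2$. A cleaner way to record the conclusion is that $\widehat{\mathscr{I}}$ is exactly the set of pairs $(i,j)$ joined by a path in the original graph of pairs with $c_{ij}\neq0$. One inclusion follows because each amendment joins vertices that are already path-connected. The other follows from the transitivity argument applied repeatedly along a path. Either description gives the partition into connected components $\mathcal{I}_l$ with sizes $r_l$ summing to $n$.

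The main point requiring care is not the combinatorics but making sure the extended set remains well defined, since the lemma tacitly speaks of $\widehat{\mathscr{I}}$ as the set on which the $u_{ij}$ are defined. If the same pair $(s,t)$ can be reached through two different intermediate indices $i$ and $j$, the two candidate definitions of $u_{st}$ must agree. Lemma \ref{relations}, extended to the amended set as remarked after \eqref{amendment}, shows that they agree up to a unimodular scalar. This is precisely the freedom the construction allows, so I would simply note that fixing one choice at each amendment step is harmless.
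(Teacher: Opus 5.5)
Your proof is correct and is essentially the paper's argument. The paper also uses maximality of $\widehat{\mathscr{I}}$ under the amendment rule \eqref{amendment}: the partners of a chosen index form a block of pairs closed under further pairing, which is your transitivity step, and the paper then iterates. Packaging this as an equivalence relation is slightly cleaner. It makes isolated indices one-element classes, so $r_1+\cdots+r_s=n$ holds literally; the paper's construction collects only indices that have a partner and handles the rest separately as the $d_tI_k$ blocks. It also shows that $\widehat{\mathscr{I}}$ does not depend on the order in which amendments are performed.
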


\begin{proof}
Let $i_1^1=\min \{i: \ \exists \ j\neq i \ \mbox{such that} \ (i,j)\in \widehat{\mathscr{I}}\}$, and  $i_2^1< \cdots <i_{r_1}^1$ be all the indices $s$ such that $s\neq i_1^1$ and $(i_1^1,s)\in \widehat{\mathscr{I}}$ (of course, $i_1^1<i_2^1$). Then $(i_l^1,i_1^1)\in \widehat{\mathscr{I}}, \ l=2,...,r_1$. Since $\widehat{\mathscr{I}}$ is maximal, all pairs $(i_s^1,i_t^1)$ are in  $\widehat{\mathscr{I}}, \ s.t=2,...,r$, because by the extension process 
$$u_{i_s^1 i_t^1}=u_{i_s^1 i_1^1}u_{i_1^1 i_t^1} \ \mbox{mod} \ e^{i\theta}. $$
 Also, if for some $2\leq l\leq r_1$ and $s\neq i_1,...,i_r$ the pair $(i_l^1,s)$ is in $\widehat{\mathscr{I}}$, then the maximality of $\widehat{\mathscr{I}}$ implies that $(i_1^1,s)$ must be in it, a contradiction. 
Now, we set $\widetilde{\mathscr{I}}_1=\{(i_s^1,i_t^1): \ s,t=1,...,r\}$. 

%\textbf{Claim} $\widetilde{\mathscr{I}}=\widehat{\mathscr{I}}=\Big\{ (ij): \ 1\leq i,j\leq n\Big\}$.

%We will show that $\widehat{\mathscr{I}}\neq \widetilde{\mathscr{I}}_1$ contradicts the fact that $R$ in \eqref{small spectrum} is irreducible.

Suppose that $\widehat{\mathscr{I}}\neq \widetilde{\mathscr{I}}_1$. Let
$i_1^2=\min \{ i: \ \exists \ j\neq i \ \mbox{such that} \ (i,j)\in \widehat{\mathscr{I}}\setminus \widetilde{\mathscr{I}}_1 \}$, and let $i_2^2,...,i_{r_2}^2$ be all the indices $s\neq i_1^2$ such that $(i_1^2,s)\in \widehat{\mathscr{I}}\setminus \widetilde{\mathscr{I}}_1 \} $. It clearly follows from \eqref{cycles} and the above argument that $i_s^2\neq i_l^1$. Now,  similarly  
%perform all the extension steps which we used for comprising $\widetilde{\mathscr{I}}_1$,
% and obtain
  $\widetilde{\mathscr{I}}_2=\{(i_s^2,i_t^2): \ s,t =1,...,r_2\}$. Evidently $\widetilde{\mathscr{I}}_1\cap \widetilde{\mathscr{I}}_2 =\emptyset$.
  
  Continuing this way we will exhaust the whole $\widehat{\mathscr{I}}$ in a final number of $s$ steps, thus representing $\widehat{\mathscr{I}}$ as a disjoint union $\widehat{\mathscr{I}}=\cup_{p=1}^s \widetilde{\mathscr{I}}_p $, where   each $\widetilde{\mathscr{I}}_p$  has the structure of a lattice, that is there are $i_1^p,...,i_{s_p}^p$ such that $\widetilde{\mathscr{I}}_p=\{(i_\alpha^p,i_\beta ^p)$, \ \ $\alpha, \beta \in \{i_1^p,...,i_{s_p}^p\} $ . The proof of Lemma \ref{all i j} is done.  
\end{proof}

  Finally, for those $(i,j)$ that are not in $\widehat{\mathscr{I}}$ we set $u_{ij}=I_k$. With this setting we see that $k\times k$ block structure of $A_2$ is comprised of $c_{ij}u_{ij}$ blocks, $i,j=1,...,n$. 
Of course, relation \eqref{cycles} holds for  every $j_0,...,j_r$ with $(j_l,j_{l+1})\in \widehat{\mathscr{I}}$. 
We now proceed with the proof of Theorem \ref{main theorem} in the case $m=2$.

Rearranging, if necessary, the numeration of the blocks (that is rearranging the numeration of the basic vectors), we may without loss of generality  think that 
$$(i_1^1,...,i_{r_1}^1)=(1,...r_1), \ 
(i_1^2,...,i_{r_2}^2)=(r_1+1,...,r_1+r_2),..., $$ 
and, of course, after such possible rearrangement $A_1$ is still a block diagonal matrix with each $k\times k$ diagonal block being a scalar multiple of $I_k$,  and for $A_2$ the $k\times k$ block structure is in the form 
\begin{equation}\label{blocks of A 2}
A_2 =\left [\begin{array}{ccccccc} D_1 &0 &\cdot &\cdot  & 0&\cdots& 0 \\ 0 &  D_2 &0  & \cdot  &0&\cdots &0 \\ \cdot & \cdot &\cdot & \cdot & \cdot &\cdot &\cdot \\ 0 & 0  & 0 & D_s & 0 &\cdots &0\\ 0& \cdot &\cdot &\cdot&d_{s+1}I_k&\cdots &0 \\ \cdot &\cdot &\cdot & \cdot &\cdot &\cdot &\cdot \\ 0 & \cdot & \cdot & \cdot & 0&\cdots & d_{n}I_k\end{array} \right ],
\end{equation}
where $d_t=c_{tt}, \ t=r_1+\cdots +r_s,...,n$, and  $D_j$ is the following $kr_j\times kr_j$ matrix consisting of $k\times k$ blocks. Let 
\begin{equation}\label{t j}
t_j=r_1+\cdots+r_{j-1}, \ t_0=0
\end{equation}
then
\begin{equation}\label{D}
D_j=\left [ \begin{array}{ccc}c_{t_j+1,t_j+1}u_{t_j+1, t_j+1}& \cdots & c_{t_j+1, t_{j+1}}u_{t_j+1,t_{j+1}}\\ \cdots & \cdots & \cdots \\ c_{t_{j+1},t_j+1}u_{t_j+1,t_j+1} & \cdots &c_{t_{j+1},t_{j+1}}u_{t_{j+1},t_{j+1}}\end{array}\right ]. \end{equation}

 Write
$$U_j=\left [ \begin{array}{ccccc} u_{t_{j+1},t_j+1}&0& \cdots &0&0 \\0 & u_{t_{j+1},t_j+2} & \cdots &0&0 \\ \cdots & \dots & \cdots & \cdots&0 \\ 0 & 0 &\cdots & u_{t_{j+1},t_{j+1}-1}&0\\ 0 & 0 & \cdots & 0 & I_k \end{array}\right ]. $$

Now,  \eqref{cycles} implies
$$U_jD_jU_j^*= \left [ \begin{array}{ccc}c_{t_j+1,t_j+1}e^{i \theta_{t_j+1,t_j+1}}I_k& \cdots & c_{t_j+1, t_{j+1}}e^{i\theta_{t_j+1, t_{j+1}}}I_k\\ \cdots & \cdots & \cdots \\ c_{t_{j+1},t_j+1}e^{i\theta_{t_{j+1},t_j+1}}I_k & \cdots &c_{t_{j+1},t_{j+1}}e^{i\theta_{t_{j+1},t_{j+1}}}I_k\end{array}\right ].$$

Thus, if
\begin{equation}\label{matrix U}
\mathscr{U}=\left [\begin{array}{cccccccc} U_1 &0 &\cdots &0&0&\cdots &\cdots&0 \\ 0&U_2&\cdots &0&0&\cdots &\cdots& 0\\ \cdot &\cdot & \cdot &\cdot &\cdot &\cdot &\cdot & \cdot \\0 & 0 &\cdots & U_s&0 &\cdots &\cdots &0\\ 0&0&\cdots &0 &I_k&0 &\cdots &0 \\ 0 & 0 & \cdots &0 & 0 &I_k& \cdots &0  \\\cdot&\cdot &\cdot &\cdot &\cdot &\cdot &\cdot &\cdot \\0 &0 & \cdots &0 &0&0&\cdots &I_k \end{array}\right ], 
\end{equation}
then $\mathscr{U}$ is unitary, commutes with $A_1$,  and $\mathscr{U}A_2\mathscr{U}^*$ consists of $k\times k$ blocks, each of which is a scalar multiple of $I_k$. Of course, this implies that for every $1\leq j\leq k$ the subspace 
\begin{equation}\label{L j}
\mathscr{L}_j=span \{e_j,e_{j+k},e_{j+2k},...,e_{j+(n-1)k} \}\end{equation} 
is invariant under the action of both $A_1$ and $A_2$, and the restrictions of both $A_1$ and $A_2$ to $\mathscr{L}_j$ are the same for all $j$,
$$A_1\Big|_{\mathscr{L}_j} = diag (\lambda_1,...,\lambda_n), \ A_2\Big|_{\mathscr{L}_j}=\left [ e^{i\theta_{rl}}c_{rl}\right ]_{r,l=1}^n.$$ 
The proof of Theorem \ref{main theorem} in the case $m=2$ is finished.

\textbf{Remark}. If the polynomial $R$ in \eqref{small spectrum} is irreducible, then  the partition claimed in  Lemma \ref{all i j}, in fact, consists of only 1 element (so it is not a partition). Indeed,
for $1\leq p \leq s$ \ let  
$$E_p=\left [ \begin{array}{cccc}\lambda_{t_p+1}I_k& 0 &\cdots &0 \\ 0 &\lambda_{t_p+2}I_k & \cdots &0\\ \cdot & \cdot & \cdot & \cdot \\ 0 & 0 & \cdots &\lambda_{t_{p+1}}I_k \end{array} \right ]. $$ 
Then  \eqref{blocks of A 2} yields 
\small\begin{eqnarray}
&\Big\{R^k(x_1,x_20=0\Big\}=\sigma_p(A_1,A_2)=\Big\{det (x_1A_1+x_2A_2-I_{nk})=0\Big\} \nonumber \\
&  \label{factor}   \\
&=\bigg\{ \bigg(\displaystyle \prod_{p=1}^s det(x_1E_p+x_2D_p-I_{r_p})\bigg)\bigg(\displaystyle \prod_{l=t_s+1}^n (\lambda_s x_1+d_sx_2-1)\Big)^k=0 \bigg \}.\nonumber
\end{eqnarray}\normalsize
If $t_s=r_1+\cdot +r_s<n$, then every $(x_1,x_2)$ that annihilates
$(\lambda_nx_1+d_nx_2-1) $ belongs to the zero set of $R$. Since $R$ is irreducible, that means that the lineal polynomial $(\lambda_nx_1+d_nx_2-1)$ is divisible by $R$, so $R$ must be of degree 1. But in this case by \cite{CSZ} matrices $A_1$ and $A_2$ commute, and, as mentioned above, in this case all $c_{ij}$ vanish for $i\neq j$, a contradiction. Thus, $r_1+\cdots +r_s=n$ and 
\begin{equation}\label{A 2 final}
A_2 =\left [\begin{array}{cccc} D_1 &0 &\cdots   & 0 \\ 0 &  D_2 &\cdots & 0\\ \cdot & \cdot &\cdot & \cdot  \\ 0 & 0  & \cdots & D_s \end{array} \right ].
 \end{equation}
Therefore, in this case
\begin{equation}\label{matrix U irreducible}
\mathscr{U}=\left [\begin{array}{cccc} U_1 &0 &\cdots &0 \\ 0&U_2&\cdots &0\\ \cdot &\cdot & \cdot &\cdot  \\0 & 0 &\cdots & U_s\end{array}\right ], 
\end{equation}
It easily follows from \eqref{A 2 final} and \eqref{matrix U irreducible} that, if $s>1$, then each subspace \eqref{L j} is a direct sum of subspaces of dimensions $r_1,...r_s$, and each of these subspaces is invariant with respect to $A_1$ and $A_2$. Since the restrictions of $A_1$ and $A_2$ are the same for all $j$, 
the characteristic polynomial of the restrictions of $A_1$ and $A_2$ to  $\mathscr{L}_j$ is independent of $j$ and is equal to $R$.  Since $\mathscr{L}_j$ is a direct sum of common invariant subspaces for $A_1$ and $A_2$, \ $R$ must be a product of $s$ factors, which contradicts its irreducibility. Thus, $s=1$, which means that, if $R$ is irreducible, then $\widehat{\mathscr{I}}= \widetilde{\mathscr{I}}_1$.

\subsubsection{$\bf{m>2}$} 

%\vspace{.2cm}

Again, suppose that $\lambda_1,...,\lambda_n$ are eigenvalues of $A_1$ each of multiplicity $k$. Since the whole tuple $(A_1,...,A_m)$ satisfies the conditions of Theorem \ref{main theorem}, each pair $(A_1,A_l)$ satisfies them as well. Therefore, as proved in the previous subsection,   each $A_l, \ l=2,...,m$ has a block structure comprised of $k\times k$ matrices $c_{ij}^lu_{ij}^l$ in an eigenbasis where $A_1$ is given by \eqref{A 1}, and the matrices $u_{ij}^l$ are unitary with $u^l_{ii}=I_k$. 

\vspace{.2cm}

The following Lemma is an analog of Lemma  \ref{relations}. 
\vspace{.2cm}

\begin{lemma}\label{relations1}
Let $j_0,...j_r$ be distinct numbers between $1$ and $n$, and $2\leq s_1,...,s_{r+1}\leq m$. Set $j_{r+1}=j_0.$  If $\displaystyle \prod_{l=0}^r c_{j_lj_{l+1}}^{s_{l+1}}\neq 0$, 	then
\begin{equation}\label{cycles1}
\displaystyle \prod_{l=0}^r u_{j_l j_{l+1}}^{s_{l+1}}=e^{i\theta}I_k .	
\end{equation}
	
\end{lemma}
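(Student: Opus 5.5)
I will mimic the proof of Lemma \ref{relations}, replacing the single matrix $A_2$ by the mixed word
$$W=A_{s_1}\Po_{j_1}A_{s_2}\Po_{j_2}\cdots A_{s_r}\Po_{j_r}A_{s_{r+1}} .$$
Because $r\leq n-1$, each $A_{s_l}$ lies in $\{A_2,\dots ,A_m\}$, and the projections $\Po_{j_1},\dots ,\Po_{j_r}$ are distinct, this word is in $\mathscr{W}$. The hypothesis \eqref{big spectrum} therefore gives $\sigma_p(A_1,W)=\{\mathscr{R}_W^k=0\}$ with $\deg \mathscr{R}_W=n$. By \eqref{regular W}, $\frac{\partial \mathscr{R}_W}{\partial x_1}\big|_{\eta_{j_0}}\neq 0$, so Theorem \ref{local spectral} applies at $\tau_{j_0}=(1/\lambda_{j_0},0)$. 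Since $W$ is in general not self-adjoint, I will use part (b) with $l_{j_0}=k$.

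As in the $m=2$ case, I take the word collection $\widehat{\Omega}^{j_0}_{\Po\Po}(k)$. Its only element has $l(W)-n(W)=k$, contains no $\mathcal{T}$ factors, and has signature $k$. It equals
$$\Big(\Po_{j_0}W\Po_{j_0}+\tfrac{\partial x_{1j_0}}{\partial x_2}(0)\Po_{j_0}\Big)^k ,$$
where I have used $\Po_{j_0}A_1\Po_{j_0}=\lambda_{j_0}\Po_{j_0}$ and absorbed the scalar. Equation \eqref{local not self} says this power vanishes, so $\Po_{j_0}W\Po_{j_0}$ restricted to the range of $\Po_{j_0}$ equals a scalar plus a nilpotent.

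Next I expand $\Po_{j_0}W\Po_{j_0}$. Since $\Po_j^2=\Po_j$, it equals $\prod_{l=0}^r\big(\Po_{j_l}A_{s_{l+1}}\Po_{j_{l+1}}\big)$. The block structure already obtained in the $m=2$ step, applied to each pair $(A_1,A_{s})$, gives $\Po_iA_s\Po_j=c^s_{ij}\mathcal{U}^s_{ij}$. The product is therefore the $(j_0,j_0)$ block $\prod_l c^{s_{l+1}}_{j_lj_{l+1}}\cdot \prod_l u^{s_{l+1}}_{j_lj_{l+1}}$. The scalar prefactor is nonzero by hypothesis, so dividing by it shows that the unitary $k\times k$ matrix $\prod_l u^{s_{l+1}}_{j_lj_{l+1}}$ is of the form $cI_k$ plus a nilpotent.

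Its spectrum is then the single point $c$, and $|c|=1$ because the matrix is unitary. A unitary matrix is diagonalizable, so a unitary with one-point spectrum is scalar. This gives \eqref{cycles1}.

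The main point needing care is the identification of $\widehat{\Omega}^{j_0}_{\Po\Po}(k)$ as a single word. Any word whose $Q$-letters include some $\mathcal{T}^u$ needs extra $\Po$'s to keep $l(W)-n(W)=k$. But the total number of $C$-letters is bounded by the signature $k$, so no such word fits, exactly as in Lemma \ref{relations}. I will also check that the $\beta$ terms have power at least $2$ and hence cannot occur. With only one variable $x_2$ here, this bookkeeping is identical to the $m=2$ argument.
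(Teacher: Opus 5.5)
Your proposal is correct and is exactly the argument the paper intends: the paper omits the proof, saying only that one reruns the proof of Lemma~\ref{relations} with the mixed word $A_{s_1}\Po_{j_1}A_{s_2}\cdots\Po_{j_r}A_{s_{r+1}}$ in place of $A_2\Po_{j_1}A_2\cdots\Po_{j_r}A_2$, and your steps match that proof: part (b) of Theorem~\ref{local spectral}, the single word $\big(\Po_{j_0}\alpha\Po_{j_0}\big)^k$, scalar plus nilpotent, and a unitary with one-point spectrum being scalar. The only blemish is a count you inherited from the paper: your word contains $k+1$ copies of $\Po_{j_0}$, and the residue against $(w-1)^{k}$ in fact selects words with $\#\Po_{j_0}-n(W)=k+1$ (not $k$), which is the count under which your uniqueness argument holds (at most $k+1$ $Q$-slots, so all are $\Po_{j_0}$ and all letters are $\alpha$'s, with no $\beta$'s).
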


The details of the proof of this  Lemma are similar to those of Lemma \ref{relations}, The only difference is that instead of
$$W_{j_1...j_r}=A_2\pr_{j_1}A_2\pr_{j_2}\cdots A_2\pr_{j_r}A_2 $$
which we used in  the proof of Lemma \ref{relations}, here we use
$$W_{j_1...j_r}^{s_1...s_{r+1}}=A_{s_1}\pr_{j_1}A_{s_2}\pr_{j_2}\cdots A_{s_r}\pr_{j_r}A_{s_r+1}. $$
For this reason we omit the proof.

We use Lemma \ref{relations1} to slightly modify the previous subsection's process  of constructing  a unitary matrix $\mathscr{U}$ in the form \eqref{matrix U},  so that  for all $1\leq l\leq m$,  \ $\mathscr{U}A_l\mathscr{U}^*$ consists of $k\times k$ blocks, each of which is a scalar multiple of $I_k$.

 Fix $(i,j), \ i\neq j$, and let $S_{ij}=\{l: \ c_{ij}^l\neq 0\}$. If  $S_{ij}=\emptyset $, we do not include $(i,j)$ and $(j,i)$ into $\mathscr{I}$. Otherwise, we include $(i,j)$ and $(j,i)$ into $\mathscr{I}$ and change matrices $u_{ij}^l$ and constants $c_{ij}^l$ in the following way. Choose $l_{ij}\in S_{ij}$. If $l\in S_{ij,}, \ l\neq l_{ij}$, we have by Lemma \ref{relations1} 
$$u_{ij}^{l_{ij}}u_{ji}^l=e^{i\theta _{ij}^L} I_k.$$
We set the new $u_{ij}^l$ to be $u_{ij}^{l_{ij}}$ and the new  $c_{ij}^l$ to be the old one times $e^{i\theta_{ij}^l}$. Thus, the product $c_{ij}^lu_{ij}^l$ remains the same. 

If $l\notin S_{ij}$, then $c_{ij}^l=0$, and we define $u_{ij}^l=u_{ij}^{l_{ij}}$. Once agin the product $c_{ij}^lu_{ij}^l$ remains the same (in this case 0). 

We use Lemma \ref{relations1} to verify the consistency of this definition. If $c_{ij}^l=0$ and for some $s$ we have $c_{si}^l$ and $c_{sj}^l$ do not vanish, then by \eqref{cycles1}

$$ u_{si}^lu_{ij}^{l_{ij}}u_{js}^l=e^{i\theta}I_k,$$
so that
$$u_{is}^lu_{sj}^l=e^{-i\theta}u_{ij}^{l_{ij}}. $$
Since $c_{ij}^l=0$, we see that 
$$c_{ij}^lu_{ij}^{l_{ij}}=c_{ij}^lu_{is}^lu_{sj}^l=0.$$

As a result of this step we formed the set $\mathscr{I}$ of pairs $(i,j)$ where $u_{ij}^l$ is defined. This set is the same for all $l=2,...,m$ and $u_{ij}^l$ depend only on $(i,j)$ and are independent of $l$. Also, for each $l$ all pairs $(i,j)$ for which $c_{ij}^l\neq 0$ are included in  the set  $\mathscr{I}$, and \eqref{cycles1} holds when all pairs $(j_l,j_{l+1})$ are in $\mathscr{I}$.  

Next, 
%for each $l$ 
we go through the steps of extending the  set $\mathscr{I}$ and defining $u_{ij}^l$ for added $(i,j)$, which were described in the previous subsection.  Since $\mathscr{I}$ is the same for all $l$ and $u_{ij}^l$ didn't depend on $l$ when $(i,j)\in \mathscr{I}$, the final maximal  set $\widehat{\mathscr{I}}$ and matrices $u_{ij}^l$ are independent of $l$ as well. Now, we define $\mathscr{U}$ by \eqref{matrix U}. Then for each $l$ the matrix $\mathscr{U}A_l\mathscr{U}^*$ consists of $k\times k$ blocks, each of which is a scalar multiple of $I_k$. Of course, the subspaces $\mathscr{L}_j$ given by \eqref{L j} are  invariant under the action of each $A_l$, and the restriction of each $A_l$ to all $\mathscr{L}_j$ are  the same. The proof of Theorem \ref{main theorem} is complete. 
\ \ \ \ \ \ \ \ \ \ \ \ \ \ \ \ \ \ \ \ \ \ \ \ \ \ \ \ \ \ \ \ \ \ \ \ \ \ \ \ \ \ \ $\Box$

\vspace{.3cm}

Note that each word $W\in \mathscr{W}$ that appeared in the statement of Theorem \ref{main theorem}, when considered as an element of the free algebra generated by $A_1,...,A_m$,  has degree of less than or equal to 
$n^2-n+1$. The following result is a Corollary to Theorem \ref{main theorem}

Let $(A_1,...,A_m)$ be a tuple of $nk\times nk$ \ self-adjoint matrices which may not be admissible, and let $\mathscr{V}$ be the collection of all non-commutative monomials of degree less than or equal to $n^2-n+1$ in the free algebra generated by $A_1,...,A_m$. We denote by $\sigma_p(\mathscr{V})$ the proper projective joint spectrum of the matrices in $\mathscr{V}$.
   Once again, as above, without loss of generality we may assume that all matrices $A_1,...,A_m$ are invertible.

\begin{corollary}\label{cor}
Let $A_1,...,A_m$ be tuple of self-adjoint invertible $nk\times nk$ matrices. Suppose that
$$\sigma_p(A_1,...,A_m)=\Big\{ (x_1,..,x_m)\in \C^m: \ \big(R(x_1,...,x_m)\big)^k=0\Big\}, $$
where $R$ is a polynomial of degree $n$ such that $R(x_1,0,...,0)$ has simple roots as a polynomial in $x_1$. The tuple $(A_1,...,A_m)$ is unitary equivalent to a tuple that is a direct sum of $k$ identical $m$-tuples of $n\times n$ matrices, if and only if
\begin{equation}\label{sigma V}
\sigma_p(\mathscr{V}) 
=\bigg\{\bigg({\mathscr R}_\mathscr{V}(x_1,...,x_\mathscr{M})\bigg)^k=0\bigg\}, 
\end{equation}	
where $\mathscr{M}$ is the total number of monomials in $\mathscr{V}$ and ${\mathscr R}_\mathscr{V}$ is a polynomial of degree $n$.

\end{corollary}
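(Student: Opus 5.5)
Necessity is immediate, as in the necessity part of Theorem \ref{main theorem}. If in some orthonormal basis every $A_l$ is a direct sum of $k$ identical $n\times n$ blocks, then every monomial in $\mathscr{V}$ is a direct sum of $k$ identical blocks as well. Hence for every $x\in\C^{\mathscr{M}}$ the determinant of the full pencil of $\mathscr{V}$ is the $k$-th power of the determinant of the $n\times n$ pencil, which gives \eqref{sigma V} with ${\mathscr R}_\mathscr{V}$ of degree $n$.

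For sufficiency, the plan is to reduce to Theorem \ref{main theorem} by passing to an admissible tuple via Theorem \ref{admissible tr}, and then checking the hypothesis \eqref{big spectrum} for all $W\in\mathscr{W}$.

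\textbf{Step 1.} Choose a real invertible $C$ close to $I_m$ such that the transformed tuple $\widehat A$ from \eqref{transform} is admissible, using Theorem \ref{admissible tr}. By remark (2) after \eqref{spectrum transform}, $\widehat A$ has the same lattice of invariant subspaces as $A$, and it is still self-adjoint. A unitary $V$ that brings $\widehat A$ to a direct sum of $k$ identical tuples does the same for $A=C^{-1}\widehat A$, since each $A_j$ is a linear combination of the $\widehat A_s$. By \eqref{spectrum transform}, $\sigma_p(\widehat A)=\{R(C^Tx)^k=0\}$. The polynomial $R(C^Tx)$ still has degree $n$, and for $C$ close enough to $I_m$ its restriction to the $x_1$-axis still has simple roots by continuity, since simple roots persist under small perturbations. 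So $\widehat A$ satisfies the hypotheses of Theorem \ref{main theorem}, apart from \eqref{big spectrum}.

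\textbf{Step 2.} Verify \eqref{big spectrum} for $\widehat A$. Every $W\in\mathscr{W}$ built from $\widehat A$ is a product $Q_1S_1\cdots Q_rS_rQ_{r+1}$ with $r\le n-1$. By Proposition \ref{projections in algebra}, each $S_i$ is a polynomial of degree $n-1$ in $\widehat A_1$. Expanding each $\widehat A_s$ as a combination of the $A_l$, $W$ becomes a linear combination $\sum_v a_vV_v$ of monomials $V_v\in\mathscr{V}$, all of degree at most $(n-1)(n-1)+n=n^2-n+1$; this is exactly why that bound appears. Also, $\widehat A_1$ is itself a combination of elements of $\mathscr{V}$. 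Therefore $\det(x_1\widehat A_1+x_2W-I)$ is the restriction of the characteristic polynomial of the whole $\mathscr{V}$-pencil to the two-dimensional linear subspace $y=x_1 c+x_2 a$ of $\C^{\mathscr{M}}$. Since by \eqref{sigma V} that characteristic polynomial is ${\mathscr R}_\mathscr{V}^k$ (normalized so it takes the value $\pm1$ at the origin), its restriction is $\big({\mathscr R}_\mathscr{V}(x_1c+x_2a)\big)^k$, which gives \eqref{big spectrum} with ${\mathscr R}_W(x_1,x_2)={\mathscr R}_\mathscr{V}(x_1c+x_2a)$.

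\textbf{Step 3.} Apply Theorem \ref{main theorem} to $\widehat A$, and transfer the conclusion back to $A$ as in Step 1.

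The main obstacle is the degree claim in Step 2. I must ensure that ${\mathscr R}_W$ has degree exactly $n$ and not less, since a drop in degree would break the counting. Here I would use that, for $x_2=0$, the restriction gives $\det(x_1\widehat A_1-I)$, which has degree $nk$ in $x_1$ because $\widehat A_1$ is invertible. That forces ${\mathscr R}_W$ to have degree $n$. A secondary subtlety is that the projections $\mathcal{P}_j$ must be taken with respect to $\widehat A_1$; this is harmless, because the formula \eqref{pr} expresses them polynomially regardless of which self-adjoint matrix is used.
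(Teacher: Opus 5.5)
Your proposal is correct and follows the same route as the paper. You pass to a nearby real admissible transformation $\widehat A=CA$ via Theorem \ref{admissible tr}, and expand each word of $\widehat{\mathscr W}$ as a linear combination of monomials in $\mathscr V$ of degree at most $n^2-n+1$, using Proposition \ref{projections in algebra} for $\widehat A_1$. You then restrict $\det\big(\sum_v y_vV_v-I\big)=\mathscr R_{\mathscr V}(y)^k$ to a two-plane to get \eqref{big spectrum}, and invoke Theorem \ref{main theorem}.

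Two of your justifications are more explicit than the paper's. You show that $\widehat A_1$ keeps $n$ distinct eigenvalues of multiplicity $k$, which in fact already follows from admissibility. You also transfer the direct-sum form back to $A$ because each $A_j$ is a linear combination of the $\widehat A_s$, where the paper appeals only to $C$ being near $I_m$ and to the equality of invariant-subspace lattices.
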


\begin{proof}
By Theorem \ref{admissible tr} there is a close to the identity admissible transformation $C$  
$$ \widehat{A}=CA, \ \widehat{A}=(\widehat{A}_1,...,\widehat{A}_m), \ A=(A_1,...,A_m)$$
that maps our tuple $A$ into an admissible tuple. Let us denote by $\widehat{\mathscr{W}}$ the set of words \eqref{W} obtained from the tuple $\widehat{A}$ instead of $A$. Since $C$ is close to the identity, the eigenvalues of $\widehat{A}_1$ have the same multiplicities. Each word in $\widehat{\mathscr{W}}$ is a linear combination of words in $\mathscr{V}$, and the degree of this word as an element of the algebra generated by $A_1,...,A_m$ does not exceed $n^2-n+1$. 
By \eqref{spectrum transform} and \eqref{sigma V}
$$\sigma_p(\widehat{\mathscr{W}}) 
=\bigg\{\bigg({\widehat{\mathscr R}}(x_1,...,x_\mathscr{M})\bigg)^k=0\bigg\}  $$
for some polynomial $\widehat{\mathscr{R}}$ of degree $n$, which, of course, implies that the tuple $(\widehat{A}_1,...,\widehat{A}_m)$ satisfies the conditions of Theorem \ref{main theorem}. 
Since the lattices of common invariant subspaces of the tuples $A$ and $\widehat{A}$ are the same, the result follows.
\end{proof}

\end{document}